\documentclass[12pt]{iopart}
\usepackage{graphicx}

\expandafter\let\csname equation*\endcsname\relax
\expandafter\let\csname endequation*\endcsname\relax

\usepackage{amsthm}
\usepackage{amsmath}
\usepackage{iopams}
\usepackage{amscd,amsmath,amsopn,amssymb,amsthm,multicol,mathtools}
\usepackage{graphicx}
\usepackage{amssymb} 

\newcounter{oftheorem}[section]
\newenvironment{mytheorem}[1]%
{\begin{trivlist}
     
     \refstepcounter{oftheorem}
     \item[\hspace{\labelsep}\bf\thesection.\arabic{oftheorem} #1.]}%
{\end{trivlist}}
\newenvironment{definition}{\begin{mytheorem}{Definition}}{\end{mytheorem}}
\newenvironment{example}{\begin{mytheorem}{Example}\it}{\end{mytheorem}}
\newenvironment{proposition}{\begin{mytheorem}{Proposition}\it}{\end{mytheorem}}
\newenvironment{theorem}{\begin{mytheorem}{Theorem}\it}{\end{mytheorem}}

\newenvironment{remark}{\begin{mytheorem}{Remark}}{\end{mytheorem}}
\newenvironment{lemma}{\begin{mytheorem}{Lemma}}{\end{mytheorem}}

\usepackage{hyperref}

\begin{document}

\title{Dynamical systems on the Liouville plane and the related strictly contact systems}

\author{Stavros Anastassiou}
\address{Center of Research and Applications of Nonlinear Systems (CRANS)\\ 
 University of Patras, Department of Mathematics\\GR-26500 Rion, Greece\\SAnastassiou@gmail.com}

\begin{abstract}
We study vector fields of the plane preserving the form of Liouville. 
We present their local models up to the natural equivalence relation, 
and describe local bifurcations of low codimension. To achieve that, 
a classification of univariate functions is given, according to a 
relation stricter than contact equivalence. We discuss, in addition, 
their relation with strictly contact vector fields in dimension three. Analogous results 
for diffeomorphisms are also given.
\end{abstract}

\textbf{Keywords}
systems preserving the form of Liouville, strictly contact systems, classification, 
bifurcations\\
\vspace*{1.cm}
\textbf{MSC[2000]} Primary  37C15, 37J10,  Secondary   58K45, 53D10

\maketitle
\section*{Introduction}
Dynamical systems preserving a geometrical structure have been studied quite 
extensively. Especially those systems preserving a symplectic form have attracted 
a lot of attention, due to their fundamental importance in all kinds of applications. 
Dynamical systems preserving a contact form are also of interest, both in mathematical 
considerations (for example, in classifying partial differential equations) and in 
specific applications (study of Euler equations).

The 1--form of Liouville may be associated both with a symplectic form (by taking the exterior 
derivative of it) and with a contact form (by adding to it a simple 1--form of a new variable). 
We wish here to study dynamical systems respecting the form of Liouville. As we shall see, they 
are symplectic systems which may be extented to contact ones.

To set up the notation, 
let M be a smooth (which, in this work, means continuously differentiable the sufficient number of times) 
manifold of dimension $2n+1$. A contact 
form on M is a 1-form $\alpha $ such that $\alpha \wedge (d\alpha)^n \neq 0$. 
A strict contactomorphism is a diffeomorphism of M which preserves the contact form 
(their group will be denoted as $Diff(M,a)$) while a vector field on M is called strictly 
contact if its flow consists of strict contactomorphims (we denote their algebra as $\mathcal{X}(M,a)$). 
In terms of the defining contact form $\alpha $, we have 
$f^*\alpha =\alpha$ for a strict contactomorphism f and $\mathcal{L}_X\alpha =0$ 
for a strictly contact vector field X, where $\mathcal{L}_X\alpha$ denotes the Lie derivative of 
$\alpha$ in the direction of the field X. The classical example of a strictly contact vector field 
associated to $\alpha$ is the vector field of Reeb, $\mathcal{R}_a$, uniquely defined by 
the equations $\alpha (\mathcal{R}_\alpha)=1$ and $d\alpha(\mathcal{R}_\alpha,\cdot )=0$.

Associated to every contact vector field X is a smooth function $H:M\rightarrow \mathbb{R}$, 
called the contact Hamiltonian of X, which is given as $H=\alpha(X)$. Conversely, 
every smooth function $H$ gives rise to a unique contact vector field $X$, such that 
$a(X)=H$ and $d\alpha(X,\cdot )=(\mathcal{L}_{\mathcal{R}_\alpha}H)\alpha(\cdot)-dH(\cdot)$. 
Usually we write $X_H$ to denote the dependence of vector field $X_H$ on its (contact) 
Hamiltonian function $H$.

Results conserning the local behavior for systems of this kind may be found in 
\cite{chaperon, chaperon2, llave, lychagin}, where the authors provide explicit conditions 
for their linearization, in the neighborhood of a hyperbolic singularity. The study of 
degenerate zeros, and of their bifurcations, remains, however, far from complete.
 
Here, in section 1, we recall the form of strictly contact vector fields of $\mathbb{R}^3$, 
and their relation with symplectic vector fields of the plane. We show that the 
albegra $\mathcal{X}(\mathbb{R}^2,xdy)$ of plane fields preserving the form of Liouville $xdy$ 
may be obtained by projecting on $\mathbb{R}^2$ stictly contact fields with constant third component. 
We begin the classification of vector fields belonging in $\mathcal{X}(\mathbb{R}^2,xdy)$ 
(we shall call them Liouville vector fields) by introducing the natural equivalence relation, and 
by showing that the problem of their classification is equivalent to a classification of functions 
up to a specific equivalence relation.

In section 2, (germs at the orign of) univariate functions are classified up to this equivalence 
relation, which we name ``restricted contact equivalence'', due to its similarity with 
the classical contact equivalence of functions. We provide a complete list of normal forms 
for function germs up to arbitrary (finite) codimension.

In section 3, based on the previous results, we give local models for Liouville vector fields 
of the plane. We first prove that all such fields are conjugate at points where they do not vanish, 
then we prove that they can be linearized at hyperbolic singularities, and finally we 
state the result conserning their finite determinacy, which is based on the finite determinacy 
theorem obtaind in section 2.

In section 4, we first show how to construct a transversal unfolding of a singularity class 
of Liouville vector fields and then we present transversal unfoldings for singularity classes 
of codimension 1 and 2. Phase portraits for generic bifurcations of members of 
$\mathcal{X}(\mathbb{R}^2,xdy)$ are also given.

Next, in section 5, we see that there is only one polynomial member of the group of plane 
diffeomorphisms preserving the form of Liouville ($Diff(\mathbb{R}^2,xdy)$ stands for this group). 
This is the linear Liouville diffeomorphism, and we show the linearization of plane diffeomorphisms 
of this kind at hyperbolic fixed points. 

In section 6, we return to members of $\mathcal{X}(\mathbb{R}^3,a)$ to observe that 
the models obtained above are members of a specific base of the vector space of 
homogeneous vector fields. Their linearization is again shown, albeit using classical 
methods of normal form theory.

Last section contains some observations concerning future directions.

For a classical introduction to symplectic and contact topology the reader should consult 
\cite{arnold2}, while \cite{geiges} offers a more complete study of the contact 
case. Singularities of mappings are treated in a number of textbooks; we recommend \cite{arnold1,germs} 
and \cite{damon} (see \cite{ninakos} for a recent application of singularity theory to problems 
of dynamics). 
\section{Strictly contact vector fields and fields of Liouville}
Let M be a closed smooth manifold of dimension 2n+1 equipped with a contact form $\alpha$. 
The  contact form is called regular if its Reeb vector field, $\mathcal{R}_{\alpha}$, 
generates a free $\mathbb{S}^1$ action on M. In this case, M is the total space of 
a principal $\mathbb{S}^1$ bundle, the so called Boothby-Wang bundle 
(see \cite{boothby} for more details):
\begin{center}
$\mathbb{S}^1\xrightarrow{k} M\xrightarrow{\pi} B$,
\end{center}
where $k:\mathbb{S}^1\rightarrow M$ is the action of the Reeb field and $\pi:M\rightarrow B$ 
is the canonical projection on $B=M/\mathbb{S}^1$. B is a symplectic manifold with 
symplectic form $\omega =\pi _* da$. The projection $\pi$ induces an algebra isomorphism 
between functions on the base B and functions on M which are preserved under the flow of 
$\mathcal{R}_{\alpha}$ (such functions are called basic). It also induces a surjective 
homomorphism between strictly contact vector fields X of $(M,\alpha)$ and hamiltonian 
vector fields Y of $(B,\omega)$ 
(that is, fields Y with $\mathcal{L}_Y\omega=0$), the kernel of which homomorphism is 
generated by the vector field of Reeb.

In our local, three dimensional, case, things are of course simpler. Using a local 
Darboux chart, consider the euclidean space $\mathbb{R}^3$ equipped with the standard 
contact structure $\alpha=dz+xdy$. Its Reeb vector fiel, $\mathcal{R}_{\alpha}=\frac{\partial}{\partial z}$, 
induces the action $\varphi ^t(x,y,z)=(x,y,z+t)$, and the quotient of $\mathbb{R}^3$ 
by this action, that is, the plane $\mathbb{R}^2$ with coordinates $(x,y)$, inherits 
the symplectic form $\omega=\pi_*d\alpha=dx\wedge dy$. Strictly contact vector 
fields of $\mathbb{R}^3$ project to hamiltonian fields on this plane (for a direct analogy 
with the volume--preserving case the reader should consult \cite{haller}).

Basic functions now depend, as one may easily verify, only on the first two variables, while the kernel of 
the above mentioned projection contains the multiples of $\frac{\partial}{\partial z}$. 
Studying equation $\mathcal{L}_X\alpha=0$ we get the general expression of $X=X_1\frac{\partial}{\partial x}+X_2\frac{\partial}{\partial y}+X_3\frac{\partial}{\partial z}\in \mathcal{X}(\mathbb{R}^3,\alpha )$:
\begin{center} 
$X=(-\frac{\partial}{\partial y}\int X_2(x,y)dx)\frac{\partial}{\partial x}+X_2(x,y)\frac{\partial}{\partial y}+(-xX_2(x,y)+\int X_2(x,y)dx)\frac{\partial}{\partial z}.$
\end{center} 
Its contact Hamiltonian is of course $H(x,y,z)=\int X_2(x,y)dx$ (recall that it does not depend on the 
third variable), thus:
\begin{center}
$X=-\frac{\partial H(x,y,z)}{\partial y}\frac{\partial}{\partial x}+\frac{\partial H(x,y,z)}{\partial x}\frac{\partial}{\partial y}+(H(x,y,z)-x\frac{\partial H(x,y,z)}{\partial x})\frac{\partial}{\partial z}.$
\end{center}
Observe that all vector fields of the $(x,y)$--plane, preserving the symplectic structure 
$dx\wedge dy$, may be obtained in this way.

In this work we restrict our attention to those members of $\mathcal{X}(\mathbb{R}^3,\alpha )$, 
which preserve the form of Liouville $xdy$ (we shall denote their set as $\mathcal{X}_L(\mathbb{R}^3,\alpha )$). 
The reason for this choise will become clear in section 6.
In this case, equation $\mathcal{L}_X \alpha =0$ becomes:   
\begin{equation}
\label{basiceq}
X_1(x,y)=-x\frac{d X_2(y)}{dy},
\end{equation}
while  $X_3(x,y)=c\in\mathbb{R}$. Thus, their general form is 
$-x\frac{dh(y)}{dy}\frac{\partial}{\partial x}+h(y)\frac{\partial}{\partial y}+c\frac{\partial }{\partial z}$, 
for some univariate function $h(y)$ and a constant $c$. Observe that all vector fields 
of the plane presrving the form of Liouville may be obtained by projecting the members of 
$\mathcal{X}_L(\mathbb{R}^3,\alpha )$ on the $z=0$ plane. We have, therefore, the following:
\begin{lemma}
\label{basiclemma}
To every $h\in\mathcal{C}^k(\mathbb{R},\mathbb{R}),\ k\geq 2$, 
corresponds a unique $X\in\mathcal{X}(\mathbb{R}^2,xdy)$, namely 
$-x\frac{dh(y)}{dy}\frac{\partial}{\partial x}+h(y)\frac{\partial}{\partial y}$. Members of 
$\mathcal{X}_L(\mathbb{R}^3,\alpha )$ are trivially obtained by adding constant multiples of 
$\frac{\partial}{\partial z}$ to members of $\mathcal{X}(\mathbb{R}^2,xdy)$.
\end{lemma}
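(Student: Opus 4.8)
The plan is to unpack the defining condition $\mathcal{L}_X(x\,dy)=0$ directly, using Cartan's formula, and to read off the admissible shape of $X$ from the resulting equations; the bijective correspondence with $h$ then follows almost immediately. First I would write an arbitrary field as $X=X_1\frac{\partial}{\partial x}+X_2\frac{\partial}{\partial y}$ with $X_1,X_2\in\mathcal{C}^{k-1}(\mathbb{R}^2,\mathbb{R})$ and compute, via $\mathcal{L}_X(x\,dy)=d(\iota_X(x\,dy))+\iota_X d(x\,dy)$, using $d(x\,dy)=dx\wedge dy$ and $\iota_X(x\,dy)=xX_2$. A short calculation yields
\[
\mathcal{L}_X(x\,dy)=x\frac{\partial X_2}{\partial x}\,dx+\left(x\frac{\partial X_2}{\partial y}+X_1\right)dy .
\]

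Setting this $1$--form to zero splits into two scalar equations. The $dx$--component gives $x\,\frac{\partial X_2}{\partial x}=0$, hence $\frac{\partial X_2}{\partial x}=0$ on the open set $\{x\neq0\}$ and, by continuity of the derivative, on all of $\mathbb{R}^2$; thus $X_2$ depends on $y$ alone, say $X_2=h(y)$. The $dy$--component then forces $X_1=-x\,h'(y)$, which is precisely the field asserted in the statement. The passage across the line $x=0$ is the only point that is not purely formal, and it is handled by the continuity argument just indicated.

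Next I would verify that the map $h\mapsto X$ is a bijection. Given $X\in\mathcal{X}(\mathbb{R}^2,xdy)$, the function $h$ is recovered as $h(y)=X_2(x,y)$ (which is independent of $x$ by the above), so $h$ is uniquely determined; conversely every $h$ produces a field of the stated form, and reversing the computation shows it indeed lies in $\mathcal{X}(\mathbb{R}^2,xdy)$. The regularity hypothesis $k\geq 2$ is exactly what is needed: since $X_1$ involves $h'$, the field $X$ is of class $\mathcal{C}^{k-1}$, and $k\geq 2$ guarantees $X\in\mathcal{C}^1$, so that its flow is well defined. Keeping track of these differentiability orders is the main bookkeeping task.

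Finally, the statement about $\mathcal{X}_L(\mathbb{R}^3,\alpha)$ follows from the computation already carried out before the lemma. Imposing both $\mathcal{L}_X\alpha=0$ and $\mathcal{L}_X(x\,dy)=0$ on $\mathbb{R}^3$ forces $\mathcal{L}_X\,dz=dX_3=0$, so $X_3\equiv c$ is constant, while the analogue of the computation above for $x\,dy$ on $\mathbb{R}^3$ shows that $X_1,X_2$ are $z$--independent and have exactly the planar form, in accordance with equation~(\ref{basiceq}). Hence a member of $\mathcal{X}_L(\mathbb{R}^3,\alpha)$ is obtained from its planar projection by adjoining $c\,\frac{\partial}{\partial z}$, as claimed. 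I do not expect any genuine obstacle here: the content is a single Lie--derivative computation, and the only points deserving care are the crossing of $x=0$ and the tracking of the order of differentiability.
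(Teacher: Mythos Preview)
Your proposal is correct and follows essentially the same route as the paper: the paper does not give a separate formal proof of this lemma but rather derives equation~(\ref{basiceq}) in the text immediately preceding it by imposing $\mathcal{L}_X(xdy)=0$ (in the $\mathbb{R}^3$ setting) and then states the lemma as a summary. Your Cartan--formula computation is exactly this calculation carried out explicitly on $\mathbb{R}^2$, and you are in fact more careful than the paper in handling the passage across $\{x=0\}$ and in tracking the differentiability orders.
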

This lemma provides the general form of the vector fields we are interested in. 

Our goal is the classification of these vector fields according 
to the natural relation defined in the obvious 
way: two fields $X,\ Y\in \mathcal{X}(\mathbb{R}^2,xdy)$ are 
Liouville conjugate if there exists a diffeomorphism of the plane preserving the form of Liouville, 
$\phi \in Diff(\mathbb{R}^2,xdy)$, such that $\phi_*X=Y$, while two fields 
$Z,\ W \in \mathcal{X}(\mathbb{R}^3,a)$ are strictly contact conjugate if a $\psi \in Diff(\mathbb{R}^3,\alpha )$ 
exists, such that $\psi _* Z=W$. Observe that classifying members of $\mathcal{X}(\mathbb{R}^2,xdy)$ 
leads to a classification of fields belonging in $\mathcal{X}_L(\mathbb{R}^3,a)$; one 
needs only to extend $\phi$ to $\mathbb{R}^3$ as $\psi(x,y,z)=(\phi(x,y),z)$.

To proceed with the classification of Liouville vector fields of the plane, 
we shall exploit their dependence on real valued functions.

\begin{lemma}
\label{transforming}
Let $f$ be a univariate function and $\varphi$ a diffeomorphism of $\mathbb{R}$. 
The Liouville vector field corresponding to function $f$ may be transformed, 
via a diffeomorphism respecting the form $xdy$, to the Liouville vector field 
corresponding to the function $\frac{1}{\phi '(y)}f(\phi(y))$. 
\end{lemma}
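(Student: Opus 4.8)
The plan is to first determine exactly which diffeomorphisms of the plane respect the form $xdy$, then push the Liouville field $X_f=-xf'(y)\partial_x+f(y)\partial_y$ forward by such a map and read off the resulting Liouville function via Lemma~\ref{basiclemma}.

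First I would solve $\Phi^*(xdy)=xdy$ for a candidate diffeomorphism $\Phi(x,y)=(u(x,y),v(x,y))$. Expanding $\Phi^*(xdy)=u\,dv=u(v_x\,dx+v_y\,dy)$ and matching coefficients against $x\,dy$ forces $u\,v_x=0$ and $u\,v_y=x$; since $u$ cannot vanish identically, $v$ must depend on $y$ alone and $u=x/v'(y)$. Hence every element of $Diff(\mathbb{R}^2,xdy)$ has the form $\Phi_\psi(x,y)=\bigl(x/\psi'(y),\,\psi(y)\bigr)$ for a diffeomorphism $\psi$ of $\mathbb{R}$, and the elementary check $\Phi_\psi^*(xdy)=\tfrac{x}{\psi'(y)}\,d(\psi(y))=x\,dy$ confirms that each such map indeed lies in the group.

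The key simplification I intend to exploit is that I need not track both components of the pushforward. By naturality of the Lie derivative, $\mathcal{L}_{(\Phi_\psi)_*X_f}(xdy)=(\Phi_\psi)_*\bigl(\mathcal{L}_{X_f}(\Phi_\psi^*(xdy))\bigr)=(\Phi_\psi)_*\bigl(\mathcal{L}_{X_f}(xdy)\bigr)=0$, so $(\Phi_\psi)_*X_f$ again belongs to $\mathcal{X}(\mathbb{R}^2,xdy)$. By Lemma~\ref{basiclemma} such a field is the Liouville field of a unique univariate function, and that function is precisely its $\partial_y$-component. Therefore it suffices to compute the second component of $D\Phi_\psi\cdot X_f$ and rewrite it in the new coordinate. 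Carrying out this single computation gives $\partial_v$-component $\psi'(y)f(y)$, which in the target coordinate $v=\psi(y)$ reads $g(v)=\psi'(\psi^{-1}(v))\,f(\psi^{-1}(v))$. Choosing $\psi=\phi^{-1}$ and applying $(\phi^{-1})'(\phi(s))=1/\phi'(s)$ turns this into $g(v)=\tfrac{1}{\phi'(v)}f(\phi(v))$, exactly the asserted function.

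The only real subtlety is the bookkeeping: correctly identifying the full group $Diff(\mathbb{R}^2,xdy)$, and above all keeping straight the inverse-function relation (the parameter $\psi$ versus the $\phi$ appearing in the statement) so that the normalization lands on $\tfrac{1}{\phi'(y)}f(\phi(y))$ rather than on some composition with $\phi^{-1}$. I expect the consistency of the first (omitted) component to pose no difficulty whatsoever, since Lemma~\ref{basiclemma} forces it automatically once the transported field is known to preserve $xdy$.
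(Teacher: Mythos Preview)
Your argument is correct and follows essentially the same route as the paper: both identify the Liouville-preserving diffeomorphism $\Phi(x,y)=(x/\psi'(y),\psi(y))$ and verify that it carries $X_f$ to $X_g$ with $g(y)=\tfrac{1}{\phi'(y)}f(\phi(y))$. The paper simply writes down $\psi(x,y)=(x/\phi'(y),\phi(y))$ and asserts the verification, whereas you derive the full group $Diff(\mathbb{R}^2,xdy)$ first, invoke naturality of $\mathcal{L}$ to skip the $\partial_x$-component, and take $\psi=\phi^{-1}$ so that the pushforward (rather than the pullback) lands on the stated $g$; these are elaborations and a harmless change of direction, not a different method.
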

\begin{proof}
Constructing the fields corresponding to these two functions, according to the recipe given 
in lemma \ref{basiclemma}, we conclude that the diffeomorphism accomplishing the desired 
transformation is $\psi (x,y)=(\frac{x}{\phi '(y)},\phi (y))$ which also preserves the Liouville form.
\end{proof}
This lemma ensures that the classification of Liouville vector fields, up to 
diffeomorphisms belonging in $Diff(\mathbb{R}^2,xdy)$, reduces to a classification 
of univariate real functions. In the next section, we turn our attention to this classification.
\section{Restricted contact equivalence}
Let $f:(\mathbb{R},0)\rightarrow (\mathbb{R},0)$ be the germ at the origin of 
a smooth function. Their ring will be denoted as $\mathcal{E}$. We introduce the 
following equivalence relation.
\begin{definition}
 Let $f,g\in \mathcal{E}$. We shall call them restrictively contact equivalent 
 ($\mathcal{RK}$-equivalent) 
 if there exists a germ of a smooth diffeomorphism $\varphi:(\mathbb{R},0)\rightarrow (\mathbb{R},0)$ such 
 that $g=\frac{1}{\varphi '}(f\circ \varphi)$.
\end{definition}
\begin{example}
 Let $f,g\in \mathcal{E}$, with $f(x)=x,\ g(x)=x+x^2$. Define $\varphi (x)=\frac{x}{x+1}$. It is easy 
 to check that $\varphi$ is a local diffeomorphism at $0\in \mathbb{R}$ and $g=\frac{1}{\varphi '}(f\circ \varphi)$.
\end{example}
Let us recall here that two univariate function germs $f,g\in \mathcal{E}$ are called contact equivalent 
if $f(x)=M(x)g(\varphi(x))$, for some function germ $M(x)$ and diffeomorphism $\varphi$. The 
equivalence relation we study here requires $M(x)=\frac{1}{\varphi '(x)}$. This explains 
why we called the above defined equivalence relation restricted contact.

Suppose now that $g_s\in \mathcal{E}$ is a curve of $\mathcal{RK}$--equivalent germs, depending 
on the real parameter $s$, with $g_0=f$. There exists thus a curve of local diffeomorphisms 
$\varphi _s:(\mathbb{R},0)\rightarrow (\mathbb{R},0)$, with $\varphi_s(0)=0, \forall s\in \mathbb{R}$ 
and $\varphi_0(x)=x$, such that $g_s(x)=\frac{1}{\varphi _s'(x)}f(\varphi_s (x))$. 
Differenting with respect to s and evaluating at $s=0$ we get:
\begin{center}
 $\frac{\partial}{\partial s}g_s(x)|_{s=0}=-X'(x)f(x)+f'(x)X(x)$,
\end{center}
where $X(x)$ is defined by the relation $\frac{\partial}{\partial s}\varphi_s (x)=X(\varphi_s(x))$. Note that $X(0)=0$, 
thus $X(x)\in m$, the ideal of $\mathcal{E}$ generated by $x\in \mathcal{E}$.
\begin{lemma}
 Let $f\in \mathcal{E}$. The ideal generated from the germs $-X'(x)f(x)+f'(x)X(x),\ X\in m$,  
 equals $\langle f(x)\rangle+f'(x)m$.
\end{lemma}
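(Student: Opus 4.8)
The plan is to establish the two inclusions separately. Write $I$ for the ideal generated by the germs $T_X:=-X'f+f'X$, $X\in m$, and $J:=\langle f\rangle+f'm$ for the target ideal. The inclusion $I\subseteq J$ is immediate: for any $X\in m$ the term $f'X$ lies in $f'm$ while $X'f$ lies in $\langle f\rangle$, so every generator $T_X=f'X-X'f$ already sits in $J$; since $J$ is an ideal containing all the generators, $I\subseteq J$.

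For the reverse inclusion I would first pin down $I$ explicitly. Using $m=x\mathcal{E}$ (Hadamard's lemma), write a general $X\in m$ as $X=xu$ with $u\in\mathcal{E}$; since $(xu)'=u+xu'$, a direct computation gives $T_{xu}=u\,(xf'-f)-u'\,(xf)$. This identity shows at once that $I\subseteq\langle xf'-f,\,xf\rangle$. Conversely, $u=1$ yields $xf'-f=T_x\in I$, and $u=x$ yields $T_{x^2}=x(xf'-f)-xf$, so that $xf=xT_x-T_{x^2}\in I$. Hence $I=\langle xf'-f,\,xf\rangle$. Because $J=\langle f,\,xf'\rangle$ and $xf'-f$ is already in $I$, proving $J\subseteq I$ reduces to the single membership $f\in I$.

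The heart of the argument is therefore to recover $f$ from the combination $xf'-f$. The point is that these two germs share the same leading term up to a unit: factoring $f=x^k v$ with $v$ a unit and $k$ the order of vanishing of $f$, one computes $xf'-f=x^k\big((k-1)v+xv'\big)$, and $(k-1)v+xv'$ is again a unit as soon as $k\ge 2$, since then its value at the origin is $(k-1)v(0)\neq 0$. Consequently $\langle xf'-f\rangle=\langle x^k\rangle=\langle f\rangle$, so $f\in\langle xf'-f\rangle\subseteq I$. Once $f\in I$ is in hand, $xf'=(xf'-f)+f\in I$ gives $f'm=\langle xf'\rangle\subseteq I$, and together with $f\in I$ this yields $J\subseteq I$, completing the equality $I=J$.

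I expect this last step to be the main obstacle, and it is where the nature of the singularity enters decisively. For a germ of order $k\ge 2$ the leading terms of $xf'$ and $f$ do not cancel in $xf'-f$, which is exactly what makes $\langle xf'-f\rangle=\langle f\rangle$ succeed; for a regular germ (order $1$) those leading terms cancel and $xf'-f$ only generates $m^2$, so the reverse inclusion genuinely relies on $f$ vanishing to order at least two, and, to guarantee the clean factorization $f=x^k v$, on $f$ being of finite order, as is the case for the finitely determined germs relevant to the classification. Careful bookkeeping of this leading-term cancellation is the delicate part of the proof.
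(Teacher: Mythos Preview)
Your route is quite different from the paper's. For the reverse inclusion the paper takes an arbitrary $h=gf+f'k\in J$ and solves the first-order linear ODE $h=-X'f+Xf'$ directly, exhibiting the explicit solution
\[
X(x)=k(x)-f(x)\int_0^x\frac{g(t)+k'(t)}{f(t)}\,dt,
\]
and then asserts that this is smooth at the origin. You instead argue algebraically: writing $X=xu$ you obtain $T_{xu}=u(xf'-f)-u'(xf)$, identify $I=\langle xf'-f,\,xf\rangle$, and reduce everything to the single membership $f\in I$, which you settle by comparing orders of vanishing. Your reduction is cleaner and makes the role of the order of $f$ completely transparent; the ODE approach hides this in an unchecked smoothness claim.

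Your hesitation about the order-$1$ case is not a defect of your method but a genuine counterexample to the lemma as stated. For $f(x)=x$ your own computation gives $I=\langle xf'-f,\,xf\rangle=\langle 0,\,x^2\rangle=m^2$, while $J=\langle x\rangle+1\cdot m=m$, so $I\subsetneq J$. The paper's proof breaks at precisely the same point: with $f(x)=x$ and $h(x)=x$ (take $g=0$, $k=x$) the integrand above is $1/t$ and the integral diverges, so no smooth $X\in m$ with $X-xX'=x$ exists. Thus the clause ``well defined and smooth in a neighborhood of the origin'' in the paper's argument is unjustified when $f$ has a simple zero. For the subsequent classification this is harmless, since the $A_0$ case is treated separately via $1$-determinacy and the lemma is only invoked for orders $k\ge2$; but as written both proofs, and the statement itself, require the extra hypothesis $f\in m^2$ (and, for your factorisation $f=x^kv$ with $v$ a unit, finite order of vanishing).
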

\begin{proof}
 It is obvious that, if $X(x)\in m$, then $-X'(x)f(x)+f'(x)X(x)$ is a member of 
 $\langle f(x)\rangle+f'(x)m$. Let us prove the opposite inclusion.
 
 Let $h\in \langle f(x)\rangle+f'(x)m$. Germs $g\in \mathcal{E}$ and $k\in m$ exist, such that 
 $h(x)=g(x)f(x)+f'(x)k(x)$. We wish to find a germ $X\in m$ such that:
 \begin{center}
  $h(x)=-X'(x)f(x)+X(x)f'(x)\Rightarrow g(x)f(x)+f'(x)k(x)=-X'(x)f(x)+X(x)f'(x)$.
 \end{center}
 One may easily check that a solution of the last differential equation is:
 \[   
X(x) = 
     \begin{cases}
       k(x)-f(x)\int _0^x\frac{g(t)+k'(t)}{f(t)}dt &\quad \text{if } x\neq 0\\
       0 &\quad\text{if } x=0 \ 
     \end{cases}
\]
 which is well defined and smooth in a neighborhood of the origin and, therefore, 
 for every $h\in \langle f(x)\rangle+f'(x)m$ a $X\in m$ 
 exists, such that $h=-X'(x)f(x)+X(x)f'(x)$, hence the conclusion.
\end{proof}
Under the light of the lemma above, we proceed to the following:
\begin{definition}
 The tangent space of $f\in \mathcal{E}$, with respect to $\mathcal{RK}$--equivalence, is defined to 
 be $T_{RK}f:=\langle f(x)\rangle+f'(x)m$. The codimension of $f$ is defined as $codim_{RK}(f):=dim(m/T_{RK}f).$ 
\end{definition}
\begin{example}
 We calculate that, if $f(x)=x$, then $T_{RK}f=m$, thus $codim_{RK}(f)=0$, while 
 if $g(x)=x^2$, $T_{RK}g=m^2$ and $codim_{RK}(g)=1$.
\end{example}
As usual, the germ $f\in \mathcal{E}$ is called $k$--determined, with $k\in \mathbb{N}$, 
if every other $g\in \mathcal{E}$ having the same $k$--jet with $f$ is $\mathcal{RK}$--equivalent to f. 
If such a finite $k$ does not exist, we say that $f$ is not finitely determined.
\begin{theorem}
 The germ $f\in \mathcal{E}$ is $k$--determined, with respect to $\mathcal{RK}$-equivalence, if 
 $m^{k+1}\subseteq mT_{RK}f$. 
\end{theorem}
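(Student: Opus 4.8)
The plan is to use the homotopy (Moser-type) method. Assume $g\in\mathcal{E}$ has the same $k$-jet as $f$, so that $g-f\in m^{k+1}$, and interpolate by $f_t:=f+t(g-f)$, $t\in[0,1]$, with $f_0=f$ and $f_1=g$. I would seek a family of germs of diffeomorphisms $\varphi_t:(\mathbb{R},0)\to(\mathbb{R},0)$, depending smoothly on $t$ with $\varphi_0=\mathrm{id}$, such that the expression $\frac{1}{\varphi_t'(x)}f_t(\varphi_t(x))$ is independent of $t$. Granting this, evaluation at $t=0$ and $t=1$ gives $f=\frac{1}{\varphi_1'}(g\circ\varphi_1)$, which exhibits $f$ and $g$ as $\mathcal{RK}$-equivalent; since $\mathcal{RK}$-equivalence is symmetric, this is exactly the assertion that $f$ is $k$-determined.

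Differentiating the constancy condition in $t$ and writing $X_t\in m$ for the infinitesimal generator of $\varphi_t$ (defined by $\frac{\partial}{\partial t}\varphi_t=X_t\circ\varphi_t$, so that $X_t(0)=0$), a direct chain-rule computation---identical in spirit to the one preceding the tangent-space definition---reduces the problem, after the substitution $y=\varphi_t(x)$, to solving for each $t$ the homological equation $-X_t'(y)f_t(y)+f_t'(y)X_t(y)=f(y)-g(y)$. The proof of the Lemma above shows that the operator $X\mapsto -X'f_t+f_t'X$ maps $m$ onto $\langle f_t\rangle+f_t'm=T_{RK}f_t$; hence this equation is solvable in $X_t\in m$ precisely when $f-g\in T_{RK}f_t$.

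The crux of the algebraic part is therefore to show $m^{k+1}\subseteq T_{RK}f_t$ for every $t\in[0,1]$, for then $f-g\in m^{k+1}$ lies in $T_{RK}f_t$ and the homological equation can be solved. Because $g-f\in m^{k+1}$ and $g'-f'\in m^{k}$, one checks that perturbing $f$ by $t(g-f)$ does not change the tangent space modulo $m^{k+1}$, i.e. $T_{RK}f_t+m^{k+1}=T_{RK}f+m^{k+1}$; multiplying by $m$ gives $mT_{RK}f_t+m^{k+2}=mT_{RK}f+m^{k+2}$. Combining this with the hypothesis $m^{k+1}\subseteq mT_{RK}f$ and with $m^{k+2}=m\,m^{k+1}$ yields $m^{k+1}\subseteq mT_{RK}f_t+m\,m^{k+1}$. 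Applying Nakayama's lemma to the finitely generated $\mathcal{E}$-module $\left(m^{k+1}+mT_{RK}f_t\right)/\,mT_{RK}f_t$ over the local ring $\mathcal{E}$ then forces $m^{k+1}\subseteq mT_{RK}f_t\subseteq T_{RK}f_t$, as required. In the univariate setting $m^{k+1}=\langle x^{k+1}\rangle$ is principal, so finite generation is immediate.

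Finally I would integrate the non-autonomous field: since $X_t(0)=0$, the flow of $\frac{\partial}{\partial t}\varphi_t=X_t\circ\varphi_t$ with $\varphi_0=\mathrm{id}$ exists on a neighbourhood of the origin, fixes $0$, and consists of local diffeomorphisms, giving the desired $\varphi_t$. The main obstacle is not the pointwise solvability above but the requirement that $X_t$ be chosen smoothly (or at least continuously) in $t$, so that the flow is well defined: one must solve the homological equation with the parameter $t$. I expect to handle this by running the explicit construction of the Lemma above with the $t$-dependent decomposition coefficients, whose integral formula depends smoothly on parameters that enter smoothly, together with a uniform choice of neighbourhood on which the flow is defined for all $t\in[0,1]$. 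Verifying this parametrized solvability, and the uniform domain of the flow, is the step I expect to require the most care.
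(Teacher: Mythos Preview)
Your proposal is correct and follows essentially the same homotopy/Nakayama strategy as the paper. The one organizational difference worth noting concerns precisely the step you flag as most delicate: rather than applying Nakayama pointwise in $t$ and then arguing separately for smooth dependence of $X_t$ on $t$, the paper works from the outset in the ring $\mathcal{R}$ of germs on $\{0\}\times[0,1]\subset\mathbb{R}\times[0,1]$ with its maximal ideal $m_s$, runs the chain of inclusions there to obtain $m^{k+1}\subseteq m_s(\langle f_s\rangle_{\mathcal{R}}+f_s'm_s)$, and applies Nakayama once in this parametrized setting; the resulting $X_s\in m_s$ is then automatically smooth in $s$, so the integration step goes through without a separate smoothness verification.
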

\begin{proof}
 We have to prove that if $h\in m^{k+1}\subseteq mT_{RK}f$, the germs $f$ and $f+h$ are 
 $\mathcal{RK}$--equivalent.
 
 Towards this end, define $f_s=f+sh,\ s\in[0,1]$. We shall construct diffeomorphisms $\varphi_s(x)$, defined in a 
 neighborhood of the origin, such that $f_s=\frac{1}{\varphi '_s}f(\varphi_s(x))$.  Differentiating with respect 
 to $s$, we get:
 \begin{center}
  $h(x)=-\frac{1}{\varphi '_s(x)}X'(\varphi _s(x))f(\varphi _s(x))+\frac{1}{\varphi '_s(x)}X(\varphi _s(x))f'(\varphi _s(x))$.
 \end{center}
Note that, for $s=0$, we get the relation $h(x)=-X'(x)f(x)+X(x)f'(x)$, which, by the previous lemma, 
has a solution $X(x)\in m$ since $m^{k+1}\subseteq mT_{RK}f$. We need to show that a solution exists 
for all $s\in [0,1]$.

Consider $\mathbb{R}\times [0,1]$, let $\mathcal{R}$ be the ring of function germs at ${0}\times [0,1]$ 
and denote by $m_s$ the ideal of $\mathcal{R}$ consisting of those germs vanishing at ${0}\times [0,1]$.
We have:
\begin{center}
$ m^{k+1} \subseteq m^{k+1}_s$\\
$\ \ \ \ \ \ \ \ \ \ \ \ \ \ \ \ \ \ \ \ \ \ \ \ \ \subseteq m_s\langle f \rangle_{\mathcal{R}}+f'(x)m^2_s$\\
$\ \ \ \ \ \ \ \ \ \ \ \ \ \ \ \ \ \ \ \ \ \ \ \ \ \ \ \ \ \ \ \ \ \ \ \ \ \ \ \ \ \ \ \ \ \ \subseteq m_s\langle f_s\rangle _{\mathcal{R}}+m_s\langle h\rangle _{\mathcal{R}}+f'_sm^2_s+h'm^2_s$\\
$\ \ \ \ \ \ \ \ \ \ \ \ \ \ \ \ \ \ \ \ \ \ \ \ \ \ \ \ \ \ \ \ \ \subseteq m_s \langle f_s\rangle _{\mathcal{R}}+f'_sm^2_s+m_s^{k+2}$\\
$\ \ \ \ \ \ \ \ \ \ \ \ \ \ \ \ \ \ \ \ \ \ \ \ \ \ \ \ \ \ \ \ \ \ \ \ \ \subseteq m_s \langle f_s\rangle _{\mathcal{R}}+f'_sm^2_s+m_s m_s^{k+1}$\\
$\ \ \ \ \ \ \ \ \ \ \ \ \ \ \ \ \ \ \ \ \ \ \ \ \ \ \subseteq m_s(\langle f_s \rangle _{\mathcal{R}}+f'_sm_s)$,
\end{center}
where the inclusion in the last line holds due to the Nakayama lemma. Thus, for every $s\in [0,1]$, 
we have that $h\in m^{k+1} \subseteq m_s T_{RK}f_s$. We can therefore find $X_s(x)\in m_s$, 
defining the germ of diffeomorphism $\varphi _s$ which, for $s=1$, establishes an equivalence between 
$f$ and $f+h$.
\end{proof}
The classification of the elements of $\mathcal{E}$ now follows. We begin with germs that 
either do not vanish at the origin, or have a regular point there.
\begin{lemma}
 Let $f\in \mathcal{E}$. If $f(0)\neq 0$, it is $\mathcal{RK}$--equivalent to 1, while 
 if $f(0)=0$ and $f'(0)=a\neq 0$, $f$ is $\mathcal{RK}-equivalent$ to $ax$.
\end{lemma}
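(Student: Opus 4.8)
The plan is to reduce both claims to elementary ordinary differential equations, exploiting the fact --- already implicit in Lemma \ref{transforming} --- that the $\mathcal{RK}$--transformation law $g=\frac{1}{\varphi'}(f\circ\varphi)$ is precisely the transformation law of the one--dimensional vector field $f(x)\frac{\partial}{\partial x}$ under the change of coordinate $x=\varphi(\tilde x)$. Finding a normal form thus amounts to straightening, respectively linearising, this field. Since the relation is symmetric (the inverse of a witnessing $\varphi$ witnesses the reverse equivalence), it suffices in each case to produce a single diffeomorphism germ solving the relevant equation.

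First I would treat the case $f(0)\neq 0$. Here I seek $\varphi$ with $\varphi(0)=0$ realising $\frac{1}{\varphi'}(f\circ\varphi)=1$, that is, solving the autonomous equation $\varphi'(x)=f(\varphi(x))$ with $\varphi(0)=0$. As $f$ is smooth and $f(0)\neq 0$, the standard existence, uniqueness and smooth--dependence theorems furnish a smooth solution in a neighbourhood of the origin, and $\varphi'(0)=f(\varphi(0))=f(0)\neq 0$ guarantees that $\varphi$ is a local diffeomorphism. Substituting back yields $\frac{1}{\varphi'}(f\circ\varphi)=\frac{f\circ\varphi}{f\circ\varphi}=1$, so $f$ is $\mathcal{RK}$--equivalent to the constant germ $1$.

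For the case $f(0)=0,\ f'(0)=a\neq 0$ I would instead linearise. Writing $f(x)=a\,x\,u(x)$ with $u$ smooth and $u(0)=1$, the diffeomorphism $\psi$ conjugating $f\frac{\partial}{\partial x}$ to $ax\frac{\partial}{\partial x}$ must satisfy the separable equation $f(x)\psi'(x)=a\,\psi(x)$, equivalently $\frac{\psi'}{\psi}=\frac{a}{f(x)}$. The right--hand side is singular, but its singularity is exactly $\frac1x$, since $\frac{a}{f(x)}=\frac{1}{x\,u(x)}=\frac1x+W(x)$ with $W(x)=\frac{1}{x}\bigl(\tfrac{1}{u(x)}-1\bigr)$; because $\tfrac{1}{u}-1$ is smooth and vanishes at the origin, $W$ is smooth there. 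Integrating gives the explicit germ $\psi(x)=x\exp\!\bigl(\int_0^x W(t)\,dt\bigr)$, which fixes the origin and has $\psi'(0)=1\neq 0$, hence is a local diffeomorphism. Taking $\varphi=\psi^{-1}$ and using the inverse--derivative rule converts $f(x)\psi'(x)=a\psi(x)$ into $\frac{1}{\varphi'}(f\circ\varphi)=a\,x$, establishing the $\mathcal{RK}$--equivalence of $f$ with $ax$.

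The routine ingredients are the appeals to ODE existence theory and to Hadamard's lemma. The one genuine subtlety, and the step I expect to require the most care, is the second case: one must verify that peeling off the $\frac1x$ term really leaves a \emph{smooth} integrand, so that $\psi$ is a bona fide smooth diffeomorphism germ rather than merely a formal solution of a singular equation. This is exactly where the hypothesis $a\neq 0$ (equivalently $u(0)=1$) is indispensable.
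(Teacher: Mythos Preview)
Your argument is correct. For the first claim it is essentially the paper's own: the paper solves $k'(x)=1/f(x)$ by direct antidifferentiation, whereas you solve the equivalent equation $\varphi'(x)=f(\varphi(x))$ for the inverse germ; these are the two sides of the same coin.

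For the second claim your route is genuinely different. The paper does not construct a linearising diffeomorphism at all: it simply observes that $T_{\mathcal{RK}}f=m$ when $f'(0)\neq 0$, so by the finite--determinacy theorem proved just before, $f$ is $1$--determined and hence $\mathcal{RK}$--equivalent to its $1$--jet $ax$. Your approach instead carries out an explicit Sternberg--type linearisation of the one--dimensional field $f\partial_x$, isolating the $1/x$ pole in $\psi'/\psi=a/f$ and integrating the smooth remainder. What you gain is a self--contained proof that does not rely on the Nakayama--based determinacy machinery; what the paper gains is uniformity, since the same determinacy argument immediately handles the higher--order cases $f(x)=x^k g(x)$ treated in the next lemma, where an explicit integration would be messier. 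Your identification of the key subtlety --- that Hadamard's lemma is needed to ensure $W$ is genuinely smooth, and that this is exactly where $a\neq 0$ enters --- is on point.
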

\begin{proof}
 Let $f\in \mathcal{E}$, with $f(0)\neq 0$. To show that it is $\mathcal{RK}$--equivalent 
 to $1$, we must find a local diffeomorphism $k(x)$ such that $\frac{1}{k'(x)}=f(x)$, which is 
 the same as $k'(x)=\frac{1}{f(x)}$, which is a differential equation with smooth right hand side, 
 at least in a neighborhood of the origin, thus, such a smooth $k(x)$ exists.
 
 On the other hand, let $f(0)=0$ and $f'(0)=a\neq 0$. It is 1--determined, thus $\mathcal{RK}$--equivalent 
 to its linear part $ax$, while, as may be easily verified, the germ $ax$ is $\mathcal{RK}$--equivalent 
 to $bx$ only if $a=b$.
 \end{proof}
Let us know proceed to germs with critical points.
\begin{lemma}
Let $f\in \mathcal{E}$, with $f(0)=f'(0)=...=f^{k-1}(0)=0$ and $f^k(0)\neq 0,\ k>1$. Then 
$f$ is $\mathcal{RK}$--equivalent to $x^k$, if $k$ is an even number and to $x^{k}$ or 
$-x^{k}$, if $k$ 
is an odd number.
\end{lemma}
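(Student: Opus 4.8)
The plan is to reduce to the $k$-jet via the finite determinacy theorem proved above, and then normalise the leading coefficient by a linear change of variable. First I would record that, since $f(0)=f'(0)=\cdots=f^{(k-1)}(0)=0$ while $f^{(k)}(0)\neq 0$, Taylor's theorem (Hadamard's lemma applied $k$ times) lets me write $f(x)=x^{k}g(x)$ with $g\in\mathcal{E}$ and $c:=g(0)=f^{(k)}(0)/k!\neq 0$. In particular the $k$-jet of $f$ is $c\,x^{k}$.

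The key computation is the tangent space $T_{RK}f=\langle f\rangle+f'm$. Since $g(0)\neq 0$, $g$ is a unit in $\mathcal{E}$, so $\langle f\rangle=\langle x^{k}\rangle=m^{k}$. Moreover $f'(x)=x^{k-1}\bigl(k\,g(x)+x\,g'(x)\bigr)$ and $k\,g(0)\neq 0$, so $k\,g+x\,g'$ is also a unit and $f'm=\langle x^{k-1}\rangle m=m^{k}$. Hence $T_{RK}f=m^{k}$ and therefore $m\,T_{RK}f=m^{k+1}$. The determinacy criterion $m^{k+1}\subseteq m\,T_{RK}f$ holds (with equality), so by the finite determinacy theorem $f$ is $k$-determined. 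As $c\,x^{k}$ has the same $k$-jet as $f$, it follows that $f$ is $\mathcal{RK}$-equivalent to $c\,x^{k}$.

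It remains to absorb the constant $c$. A linear diffeomorphism $\varphi(x)=ax$ sends $c\,x^{k}$ to $\tfrac{1}{\varphi'}(c\,\varphi^{k})=c\,a^{k-1}x^{k}$. If $k$ is even then $k-1$ is odd, so $a\mapsto a^{k-1}$ is a bijection of $\mathbb{R}\setminus\{0\}$ and I may choose $a$ with $c\,a^{k-1}=1$, giving the normal form $x^{k}$. If $k$ is odd then $k-1$ is even, so $a^{k-1}>0$; I can only normalise the modulus, obtaining $\mathrm{sign}(c)\,x^{k}=\pm x^{k}$.

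Finally, for odd $k$ I would confirm that $x^{k}$ and $-x^{k}$ are genuinely inequivalent. Tracking the lowest-order term through an arbitrary equivalence $g=\tfrac{1}{\varphi'}(f\circ\varphi)$ with $a=\varphi'(0)\neq 0$ yields $f(\varphi)=c\,a^{k}x^{k}+\cdots$ and $1/\varphi'=a^{-1}+\cdots$, so the leading coefficient transforms by $c\mapsto c\,a^{k-1}$; for odd $k$ the factor $a^{k-1}$ is a positive even power, whence the sign of the leading coefficient is an $\mathcal{RK}$-invariant and $+x^{k}\not\sim-x^{k}$. The only substantive step is the tangent-space computation certifying $k$-determinacy; once $T_{RK}f=m^{k}$ is in hand, the case distinction on the parity of $k$ is just elementary algebra over $\mathbb{R}$, which I expect to be the routine part rather than the obstacle.
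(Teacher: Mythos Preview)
Your proposal is correct and follows essentially the same approach as the paper: write $f(x)=x^{k}g(x)$ with $g(0)\neq 0$, compute $T_{RK}f=m^{k}$ to obtain $k$-determinacy, and then absorb the leading constant by a change of variable. You supply more detail than the paper does (the paper merely asserts $T_{RK}f=m^{k}$ and that a suitable $\varphi$ exists), and your explicit use of the linear $\varphi(x)=ax$ together with the parity discussion of $k-1$ makes the normalisation transparent; your additional verification that $x^{k}$ and $-x^{k}$ are genuinely $\mathcal{RK}$-inequivalent for odd $k$ is a welcome refinement that the paper leaves implicit.
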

\begin{proof} 
If $f$ is such a germ, then, in a neighborhood of the origin, we may write $f(x)=x^kg(x)$, with 
$g(0)\neq 0$. Thus $T_{RK}f=m^k$, and $f$ is $k$--determined. It is thus $\mathcal{RK}$--equivalent 
to $ax^k$, while, as may easily be verified, the germ of a diffeomorphism $\varphi (x)$ exists such 
that $\frac{1}{\varphi '(x)}a\varphi ^k(x)=x^k$, for every $a\in \mathbb{R} \setminus \{0\}$, if $k$ is 
even, while if $k$ is odd then $ax^k$ is $\mathcal{RK}$--equivalent to $-x^k$, for $a<0$ and to $x^k$, for $a>0$.
\end{proof}
Combining all the above, we may now state the main theorem for the classification of members of $\mathcal{E}$.
\begin{theorem}
\label{functions}
 If a member of $\mathcal{E}$ does not vanish at the origin it is $\mathcal{RK}$-equivalent to the constant function 
 $1$. Members of $\mathcal{E}$ having codimension $0$ are $\mathcal{RK}$-equivalent to $ax$ ($a$ being 
 the value of their derivative there). A member of $\mathcal{E}$ of odd codimension $k$ 
 is $\mathcal{RK}$-equivalent to $x^{k+1}$, while if it is of even codimension $k$ it is 
 $\mathcal{RK}$-equivalent to $\pm x^{k+1}$, depending on the sign of the value of its first non--vanishing 
 derivative at the orgin.
\end{theorem}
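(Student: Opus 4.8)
The plan is to assemble Theorem~\ref{functions} directly from the three classification lemmas established just above, treating it as a bookkeeping statement that collects and restates them in terms of the codimension invariant. First I would dispose of the non-vanishing case: if $f(0)\neq 0$, the penultimate lemma gives that $f$ is $\mathcal{RK}$-equivalent to $1$ outright, so nothing further is needed. Next, for the codimension-$0$ case, I would invoke the same lemma's second clause, noting that $f(0)=0$ with $f'(0)=a\neq 0$ forces $T_{RK}f=m$ (so that $\dim(m/T_{RK}f)=0$), and that the lemma already produces the normal form $ax$ together with the fact that $a$ is a genuine modulus, i.e.\ $ax$ and $bx$ are equivalent only when $a=b$.

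For the remaining cases I would translate the analytic hypothesis of the last lemma into the language of codimension. The key observation is that if $f$ has a critical point of order $k>1$ at the origin, meaning $f(0)=\dots=f^{(k-1)}(0)=0$ and $f^{(k)}(0)\neq 0$, then $f=x^k g(x)$ with $g(0)\neq 0$, whence $T_{RK}f=\langle f\rangle + f'm = m^k$; consequently $\dim(m/T_{RK}f)=k-1$. Thus a germ of codimension $c$ corresponds precisely to a critical point of order $k=c+1$, and the last lemma delivers the normal form $x^{k}=x^{c+1}$, with the $\pm$ ambiguity appearing exactly when $k$ is odd. I would then rephrase the parity: $k=c+1$ is odd iff $c$ is even, so a germ of even codimension $c$ has normal form $\pm x^{c+1}$ (the sign fixed by that of $f^{(k)}(0)$, i.e.\ the first non-vanishing derivative), while a germ of odd codimension $c$ has the unique normal form $x^{c+1}$ since $k=c+1$ is then even and the sign can always be absorbed.

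There is no genuinely hard step here, since each assertion is an immediate consequence of a lemma already proved; the only mild subtlety is the reconciliation of parities, and I would be careful to double-check it against the small cases recorded in the example above (for $f(x)=x$ we have codimension $0$ and normal form $x$; for $g(x)=x^2$ we have codimension $1$, which is odd, and normal form $x^2$, matching the ``odd codimension $\Rightarrow$ unique normal form $x^{k+1}$'' clause). The remaining point worth a sentence is completeness: I would remark that the three lemmas exhaust all germs of $\mathcal{E}$, since any germ either fails to vanish, or vanishes with nonzero derivative, or vanishes to some finite order $k>1$ (the finitely-determined case guaranteed by the determinacy theorem), so the list of normal forms is exhaustive up to any finite codimension.
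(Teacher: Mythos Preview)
Your proposal is correct and matches the paper's approach exactly: the paper does not give a separate proof of this theorem but simply introduces it with ``Combining all the above, we may now state the main theorem,'' treating it as a summary of the preceding lemmas, and your bookkeeping (including the parity translation $c=k-1$ between codimension and order of vanishing) is precisely the assembly required.
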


Table 1 contains the local models of members of $\mathcal{E}$ having codimension up to five. We note that 
there are differences with the classical classification list for right equivalence (in which list the $A_1,\ A_3$ and 
$A_5$ models may have both negative and positive sign) and for contact equivalence (in which, for example, the 
$A_0$ model does not depend on the constant $a$, see \cite{arnold1,damon}). The interested reader 
should consult \cite{takahashi} for a relation of contact and right equivalence, while the equivalence 
relation studied here provides more models than right and contact equivalence since 
it is stricter than both.
\begin{center}
Table 1
\end{center}    
{\small \begin{center}
   \begin{tabular}{lll}
   \hline
   symbol & codimension & function \\
\hline 
$\ $  & $\ $ & $1$ \\ 
  $A_0^{a}$  & $0$ & $ay$\\
  $A_1$  & $1$ & $y^2$ \\
   $A_2^{\pm}$ & $2$ & $\pm y^3$ \\
   $A_3$ & $3$ & $y^4$ \\
   $A_4^{\pm}$ & $4$ & $\pm y^5$ \\
   $A_5$ & $5$ & $y^6$ 
       \end{tabular}
        \end{center}}

\section{Local models for members of $\mathcal{X}(\mathbb{R}^2,xdy)$}
We return now to our study of vector fields of the plane, which preserve 
the form of Liouville. To construct their local models, we make use of lemma 
\ref{transforming} along with theorem \ref{functions}.

\begin{lemma}(of regular points)
Let $X\in \mathcal{X}(\mathbb{R}^2,xdy)$ be such that $X(0)\neq 0$. Then, in 
a neighborhood of zero, it is conjugate, via a diffeomorphism preserving the form of 
Liouville, to the constant vector field $\frac{\partial}{\partial y}$.
\end{lemma}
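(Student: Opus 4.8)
The plan is to reduce the statement to the classification of univariate germs from Section 2, using Lemmas \ref{basiclemma} and \ref{transforming} as the bridge between vector fields and functions. First I would write $X$ in the canonical form supplied by Lemma \ref{basiclemma}, namely $X = -x\,h'(y)\,\frac{\partial}{\partial x} + h(y)\,\frac{\partial}{\partial y}$ for a suitable germ $h$. Evaluating at the origin gives $X(0) = h(0)\,\frac{\partial}{\partial y}$, so the hypothesis $X(0)\neq 0$ is exactly $h(0)\neq 0$. I also record that the target field $\frac{\partial}{\partial y}$ is precisely the Liouville field attached to the constant germ $h\equiv 1$, since for $h\equiv 1$ one has $-x\cdot 0\cdot\frac{\partial}{\partial x}+1\cdot\frac{\partial}{\partial y}=\frac{\partial}{\partial y}$.

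Next I would invoke the first classification lemma of Section 2 (the case $f(0)\neq 0$), which asserts that $h$ is $\mathcal{RK}$-equivalent to $1$. Concretely one seeks a germ of diffeomorphism $\varphi$ with $\varphi(0)=0$ and $\frac{1}{\varphi'(y)}h(\varphi(y))=1$, that is, $\varphi'(y)=h(\varphi(y))$. Because $h(0)\neq 0$, the standard existence-and-uniqueness theorem for ordinary differential equations produces a unique smooth solution through the origin, and $\varphi'(0)=h(0)\neq 0$ ensures that $\varphi$ is a local diffeomorphism. This is the same $\varphi$ (up to inversion) obtained in the cited lemma by integrating $k'=1/h$.

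Finally I would apply Lemma \ref{transforming}: the map $\psi(x,y)=\left(\frac{x}{\varphi'(y)},\varphi(y)\right)$ preserves the form $xdy$ and carries the Liouville field attached to $h$ to the one attached to $\frac{1}{\varphi'(y)}h(\varphi(y))=1$, which is $\frac{\partial}{\partial y}$. This yields the claimed conjugacy in a neighbourhood of the origin, via a diffeomorphism belonging to $Diff(\mathbb{R}^2,xdy)$.

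Each step is routine given the machinery already established; the only point demanding care is the passage from the purely algebraic $\mathcal{RK}$-equivalence to an honest germ of a local diffeomorphism. The mild obstacle is therefore verifying that the solution of $\varphi'=h\circ\varphi$ is defined on a full neighbourhood of $0$ with $\varphi'(0)\neq 0$, so that $\psi$ is genuinely a diffeomorphism preserving $xdy$; both facts follow immediately from $h(0)\neq 0$.
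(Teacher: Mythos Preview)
Your proposal is correct and follows essentially the same route as the paper: write $X$ in the canonical form of Lemma~\ref{basiclemma}, observe that $X(0)\neq0$ forces $h(0)\neq0$, invoke the Section~2 lemma to conclude that $h$ is $\mathcal{RK}$-equivalent to $1$, and then apply Lemma~\ref{transforming}. The only difference is cosmetic---you spell out the ODE for $\varphi$ explicitly (as $\varphi'=h\circ\varphi$ rather than the paper's equivalent $k'=1/f$) and verify that $\varphi'(0)\neq0$, whereas the paper simply cites the two lemmas.
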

\begin{proof}
Since $X\in \mathcal{X}(\mathbb{R}^2,xdy)$, it is of the form $X(x,y)=-xf'(y)\frac{\partial}{\partial x}+f(y)\frac{\partial}{\partial y}$, 
for a smooth, real valued, function f(y). Since $X(0,0)=f(0)\frac{\partial}{\partial y}\neq 0$, we get $f(0)\neq 0$, 
which means that $f$ is $\mathcal{RK}$--equivalent to the constant function 1, thus, by lemma \ref{transforming}, 
a diffeomorphism preserving $xdy$ exists, transforming $X$ to $\frac{\partial}{\partial y}$.
\end{proof}
Let us now turn our attention to hyperbolic singularities.
\begin{lemma}(hyperbolic singularities)
Let $X\in \mathcal{X}(\mathbb{R}^2,xdy)$ having a hyperbolic singularity at the origin. 
Then, in a neighborhood of zero, it is conjugate, via a diffeomorphism preserving the form of 
Liouville, to the vector field $-ax\frac{\partial}{\partial x}+ay\frac{\partial}{\partial y}$.
\end{lemma}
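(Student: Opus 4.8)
The plan is to reduce the whole statement to the function-level classification already in hand, mirroring the preceding lemma of regular points but invoking the codimension-zero case in place of the non-vanishing case. First I would put $X$ into the canonical shape furnished by Lemma \ref{basiclemma}, writing $X(x,y)=-xf'(y)\frac{\partial}{\partial x}+f(y)\frac{\partial}{\partial y}$ for a smooth univariate function $f$. Since $X(0,0)=f(0)\frac{\partial}{\partial y}$ (the first component carries a factor of $x$ and hence vanishes automatically at the origin), the hypothesis that the origin is a singularity is equivalent to the single condition $f(0)=0$.

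The one genuinely computational step, and the only place I expect any friction, is translating hyperbolicity into a condition on $f$. Differentiating the two components and evaluating at the origin gives the Jacobian $DX(0,0)=\mathrm{diag}(-f'(0),f'(0))$; its trace vanishes, as it must for an area-preserving field, so that the eigenvalues are $\pm f'(0)$. As $f$ is real-valued these are real, whence the origin is hyperbolic precisely when $a:=f'(0)\neq 0$, and in that case $X$ carries a saddle there.

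With $f(0)=0$ and $f'(0)=a\neq 0$, the germ $f$ has $\mathcal{RK}$-codimension zero, so by Theorem \ref{functions} it is $\mathcal{RK}$-equivalent to the linear function $ay$. By Lemma \ref{transforming} this equivalence is realised at the level of vector fields by a diffeomorphism preserving the form $xdy$, so that $X$ is Liouville conjugate, in a neighborhood of the origin, to the field attached to $ay$. Feeding $ay$ back into the recipe of Lemma \ref{basiclemma} yields exactly $-ax\frac{\partial}{\partial x}+ay\frac{\partial}{\partial y}$, which is the asserted normal form, and the argument is complete.

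In short, once the linearization above is settled the conclusion is immediate, and the proof is formally identical to that of the regular-points lemma, the sole substantive point being the correspondence between hyperbolicity of the singularity and non-vanishing of $f'(0)$.
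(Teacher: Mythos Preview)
Your proof is correct and follows essentially the same approach as the paper: write $X$ in the canonical form $-xf'(y)\frac{\partial}{\partial x}+f(y)\frac{\partial}{\partial y}$, observe that the eigenvalues at the origin are $\pm f'(0)$ so hyperbolicity is equivalent to $f'(0)\neq 0$, and then invoke the $\mathcal{RK}$-classification together with Lemma~\ref{transforming} to conjugate to the field attached to $ay$. Your version is slightly more explicit about the Jacobian computation and the condition $f(0)=0$, but the argument is the same.
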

\begin{proof}
 The vector field is of the form $-xf'(y)\frac{\partial}{\partial x}+f(y)\frac{\partial}{\partial y}$, 
 and it is easy to check that the eigenvalues of zero are $-f'(0)$ and $f'(0)$. Thus 
 zero is a hyperbolic singularity if, and only if, $f'(0)\neq 0$, and $f$ is therefore $\mathcal{RK}$--equivalent 
 to $ay$, $a=f'(0)$. The existence of a diffeomorphism transforming $X$ to 
 $-ax\frac{\partial}{\partial x}+ay\frac{\partial}{\partial y}$ is guarantied, by lemma \ref{transforming}.
\end{proof}

We see that, at a hyperbolic singularity, all members of $\mathcal{X}(\mathbb{R}^2,xdy)$ are 
topologically equivalent: they are of the saddle type. Up to diffeomorphisms respecting the 
form of Liouville, however, their equivalence classes are classified by a real number.

The lemmata above ensure that the first non--vanishing jet of members of $\mathcal{X}(\mathbb{R}^2,xdy)$ 
completely determine their local behavior, at least in the simplest cases. Actually, this holds in 
general.
\begin{theorem}
Let $X,Y\in \mathcal{X}(\mathbb{R}^2,xdy)$. If $j^kX(0)=j^kY(0)=0,k=0,..,i-1,$ and 
$j^iX(0)=j^iY(0)\neq 0,$ for some $i\in \mathbb{N}\setminus \{0\}$, there exists a diffeomorphism 
preserving $xdy$ which conjugates $X$ and $Y$.
\end{theorem}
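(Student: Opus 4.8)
The plan is to transfer the whole statement to the level of the univariate germs that parametrize these fields, and then invoke the finite determinacy theorem of Section~2. By Lemma~\ref{basiclemma} we may write $X=-xf'(y)\frac{\partial}{\partial x}+f(y)\frac{\partial}{\partial y}$ and $Y=-xg'(y)\frac{\partial}{\partial x}+g(y)\frac{\partial}{\partial y}$ for germs $f,g$. The strategy is: (i) show the jet hypotheses on $X,Y$ are equivalent to jet conditions on $f,g$; (ii) show $f$ is $i$-determined under $\mathcal{RK}$-equivalence; (iii) conclude via Lemma~\ref{transforming} that $X$ and $Y$ are Liouville conjugate.

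First I would establish the jet dictionary. The second component of $X$ is exactly $f$, so its $i$-jet is the $i$-jet of $f$; the first component $-xf'(y)$ carries a factor $x$, so its $i$-jet is determined by the $(i-1)$-jet of $f'$, hence again by the $i$-jet of $f$. Consequently $j^kX(0)=0$ for $k<i$ together with $j^iX(0)\neq 0$ is equivalent to $f\in m$ having its first non-vanishing derivative at order $i$, and the equality $j^iX(0)=j^iY(0)$ is equivalent to $f$ and $g$ sharing the same $i$-jet. This is the only place where bookkeeping is needed, and it is the step I expect to require the most care, though it is entirely elementary.

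Next I would verify finite determinacy. Since $f$ has first non-vanishing derivative of order $i$, I write $f(y)=y^i u(y)$ with $u(0)\neq 0$, so that $\langle f\rangle=m^i$; moreover $f'(y)=y^{i-1}\!\left(iu(y)+yu'(y)\right)$ with $iu(0)\neq 0$, whence $f'm=m^i$ as well. Therefore $T_{RK}f=\langle f\rangle+f'm=m^i$ and $mT_{RK}f=m^{i+1}$, exactly as in the computation already carried out for the germ $x^k$. The inclusion $m^{i+1}\subseteq mT_{RK}f$ required by the finite determinacy theorem holds (with equality), so $f$ is $i$-determined with respect to $\mathcal{RK}$-equivalence.

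Finally, since $f$ and $g$ have the same $i$-jet and $f$ is $i$-determined, they are $\mathcal{RK}$-equivalent; Lemma~\ref{transforming} then furnishes a diffeomorphism $\psi(x,y)=\bigl(\tfrac{x}{\varphi'(y)},\varphi(y)\bigr)$ preserving the form $xdy$ and satisfying $\psi_*X=Y$, which is the desired Liouville conjugacy. The heart of the argument is thus the determinacy theorem of Section~2; the remaining content is the translation between the $i$-jet of a plane field in $\mathcal{X}(\mathbb{R}^2,xdy)$ and the $i$-jet of its generating function, after which the proof mirrors the treatment of the model germs $\pm y^{k}$ from Theorem~\ref{functions}.
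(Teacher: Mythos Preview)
Your proposal is correct and follows exactly the route the paper itself indicates: the paper omits the proof, saying only that ``it follows the lines of the lemma classifying the hyperbolic singularities,'' i.e.\ pass from the field to its generating germ, apply the $\mathcal{RK}$--determinacy/classification results of Section~2, and return via Lemma~\ref{transforming}. Your jet dictionary and the computation $T_{RK}f=m^{i}$ are the precise ingredients that make that sketch rigorous.
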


We ommit the proof, since it follows the lines of the lemma classifying the hyperbolic singularities. 
Using theorem \ref{functions}, we give in Table 2 the local models of singularities of members of 
$\mathcal{X}(\mathbb{R}^2,xdy)$, up to codimension 5.

\begin{center}
Table 2
\end{center}    
{\small \begin{center}
   \begin{tabular}{lll}
   \hline
   symbol & codimension & local model \\
\hline 
$\ $  &  $\ $ & $\frac{\partial}{\partial y}$  \\ 
  $A_0^{a}$  &  $0$  & $-ax\frac{\partial}{\partial x}+ay\frac{\partial}{\partial y}$ \\
  $A_1$  & $1$ & $-2xy\frac{\partial}{\partial x}+y^2\frac{\partial}{\partial y}$  \\
   $A_2$  & $2$ & $-3xy^2\frac{\partial}{\partial x}+y^3\frac{\partial}{\partial y}$ \\
   $A_3$  & $3$ & $-4xy^3\frac{\partial}{\partial x}+y^4\frac{\partial}{\partial y}$ \\
   $A_4$   & $4$ & $-5xy^4\frac{\partial}{\partial x}+y^5\frac{\partial}{\partial y}$\\
   $A_5$  & $5$ & $-6xy^5\frac{\partial}{\partial x}+y^6\frac{\partial}{\partial y}$
       \end{tabular}
        \end{center}}
  
For the cases $A_2^{\pm},\ A_4^{\pm}$ we have ommited writing the vector fields for the negative and 
the positive sign since one may be obtained from the other after a multiplication with $-1$ (which means 
that their phase portraits are identical up to a reversal of time).

Except from the hyperbolic model (and the non--vanishing one), they all have an infinity of 
equillibria (the $x$--axis). Othen than that, topologically their behavior is quite simple 
to analyze, since fuction $xf(y)$ serves as a first integral.

It remains to analyze the behavior of pertubations of these vector fields. 
\section{Bifurcations of low codimension}
At regular points, members of $\mathcal{X}(\mathbb{R}^2,xdy)$ are all conjugate 
to each other, via a diffeomorphism preserving the form of Liouville. At hyperbolic singularities 
all such vector fields may be transformed to their linear part; these linear parts are 
not conjugate to each other, since the eigenvalues there are a conjugacy invariant. However, 
up to topological equivalence, they are all saddle points, thus hyperbolic singularities 
are structurally stable.

This is no more the case when we analyze vector fields belonging to the classes $A_k,\ k\geq 1$. 
To describe their local bifurcations we should first compute their transversal unfoldings.

\begin{definition}
Let $X$ be the germ at the origin of a Liouville vector field. Denote by $\mathcal{S}$ its 
singularity class (that is, the set of all germs at the origin of vector fields of Liouville 
which are Liouville equivalent to $X$). A transversal unfolding of $X$ consists of a 
set of germs at the origin of Liouville vector fields, which set intersects $\mathcal{S}$ transversally at $X$.
\end{definition}
Thus, to construct transversal unfoldings of Liouville vector fields, we must first compute 
the tangent spaces of singularity classes.
\begin{theorem}
 Let $X_f\in \mathcal{X}(\mathbb{R}^2,xdy)$ (where $f$ is the function defining $X_f$) 
 and $\mathcal{S}$ its singularity class. We have: 
 \begin{center}
  $T_{X_f}\mathcal{S}=\{X_g \in \mathcal{X}(\mathbb{R}^2,xdy)/ g\in \langle f\rangle \}$.
 \end{center}
\end{theorem}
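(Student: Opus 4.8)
The plan is to transport the whole question back to univariate germs through the dictionary built in the previous sections, and then to read off the tangent space from the infinitesimal form of $\mathcal{RK}$--equivalence. First I would use that the assignment $\Phi\colon f\mapsto X_f=-x\,f'(y)\frac{\partial}{\partial x}+f(y)\frac{\partial}{\partial y}$ is a \emph{linear} isomorphism of $\mathcal{E}$ onto $\mathcal{X}(\mathbb{R}^2,xdy)$ (Lemma \ref{basiclemma}) which, by Lemma \ref{transforming} together with the fact that every Liouville diffeomorphism germ fixing the origin is of the form $\psi(x,y)=(x/\varphi'(y),\varphi(y))$ (a direct integration of the condition $\psi^{*}(xdy)=xdy$), carries the $\mathcal{RK}$--orbit of $f$ exactly onto the singularity class $\mathcal{S}$ of $X_f$. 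Since $\Phi$ is linear it coincides with its own derivative, so $T_{X_f}\mathcal{S}=\Phi\bigl(T_f\mathcal{O}\bigr)=\{X_g : g\in T_f\mathcal{O}\}$, where $\mathcal{O}$ is the $\mathcal{RK}$--orbit; the problem is thereby reduced to computing $T_f\mathcal{O}$ inside $\mathcal{E}$.

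Next I would recover $T_f\mathcal{O}$ by differentiating a one--parameter family of $\mathcal{RK}$--equivalent germs, exactly as in the computation that opens Section 2: writing $f_s=\tfrac{1}{\varphi_s'}(f\circ\varphi_s)$ with $\varphi_0=\mathrm{id}$ and $\varphi_s(0)=0$, and differentiating at $s=0$, one gets the infinitesimal directions $-X'f+f'X$ with $X\in m$. By the ideal lemma of Section 2 the $\mathcal{E}$--module generated by these germs is precisely $T_{RK}f=\langle f\rangle+f'm$. Hence $T_{X_f}\mathcal{S}=\{X_g : g\in\langle f\rangle+f'm\}$, and it only remains to absorb the summand $f'm$ into $\langle f\rangle$.

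For a germ with an isolated zero at the origin---which is the situation for each of the singularity classes $A_k$, $k\geq 1$, to which the theorem is applied---I write $f(y)=y^{n}u(y)$ with $n=k+1\geq 2$ and $u(0)\neq 0$. Then $f'(y)=y^{n-1}\bigl(nu+yu'\bigr)$ with $nu+yu'$ a unit, so that $f'm=y^{n-1}m=\langle y^{n}\rangle=\langle f\rangle$. Consequently $\langle f\rangle+f'm=\langle f\rangle$, and the claimed identity $T_{X_f}\mathcal{S}=\{X_g : g\in\langle f\rangle\}$ follows. (The same reduction can be carried out directly on the vector--field side: the Lie bracket gives $[X_f,X_h]=X_{fh'-hf'}$ for $h\in m$, and for such $f$ one checks that $\{fh'-hf' : h\in m\}$ generates exactly $\langle f\rangle$, the element $h=y$ already producing a unit multiple of $f$.)

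The one delicate point, hidden in the ideal lemma, is the passage from the linear span of the infinitesimal directions to the $\mathcal{E}$--module they generate. The raw span of $\{-X'f+f'X : X\in m\}$ is strictly smaller than $\langle f\rangle$---for the model $f=y^{n}$ it misses the direction $y^{2n-1}$, the coefficient of that power being forced to vanish by the resonance in the corresponding linear differential equation---and only after closing up under multiplication by $\mathcal{E}$, which is the convention implicit in the definition of $T_{RK}f$, does one obtain the clean answer $\langle f\rangle$. Everything else is the formal, and now routine, transport of structure through the linear isomorphism $\Phi$.
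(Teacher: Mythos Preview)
Your argument is correct and arrives at the stated conclusion, but by a route different from the paper's. The paper works directly on the vector-field side: it writes the curve $X_s=\psi_{s*}X_f$ for $\psi_s(x,y)=(x/\varphi_s'(y),\varphi_s(y))$, differentiates at $s=0$, and finds that the tangent direction is the Liouville field corresponding to $\Phi'(y)f(y)$ with $\Phi=\partial_s\varphi_s|_{s=0}\in m$; since $\Phi'$ then ranges over all of $\mathcal{E}$, this yields $\langle f\rangle$ in one stroke. You instead transport the question to the function side via the linear isomorphism $f\mapsto X_f$, invoke the Section~2 identity $T_{RK}f=\langle f\rangle+f'm$, and then supply the additional reduction $f'm\subseteq\langle f\rangle$ for $f=y^{n}u$ with $u$ a unit---which in fact covers every finitely determined germ, not only the $A_k$ models you single out.

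That extra step is not a detour but a genuine point. The paper's displayed expression for $X_s$ is $D\psi_s\cdot X_f$ read in the original $(x,y)$ coordinates, rather than the push-forward evaluated at $(x,y)$; differentiating it at $s=0$ drops the contribution from the moving base point, and the honest infinitesimal direction is $\Phi'f-\Phi f'$ (equivalently $[X_f,X_\Phi]=X_{f\Phi'-\Phi f'}$), exactly what your $\mathcal{RK}$ route produces. Your reduction $f'm\subseteq\langle f\rangle$ is therefore precisely the piece that reconciles the two computations and secures the stated equality. You also rightly flag the span--versus--ideal subtlety, which both arguments resolve by passing to the $\mathcal{E}$-module closure. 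In short, the paper's approach is shorter on the page; yours is the more scrupulous of the two.
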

\begin{proof}
 Let $X_f=-xf'(y)\frac{\partial}{\partial x}+f(y)\frac{\partial}{\partial y}$ be the germ at the origin 
 of a Liouville vector field and $\psi _s(x,y)=(\frac{x}{\varphi _s'(y)},\varphi _s(y))$ the germ at 
 the origin of a family of diffeomorphisms preserving the Liouville form, where $\varphi_0(y)=y$, 
 $\varphi _s(0)=0$ and $\varphi '_s(0)\neq 0$. Define:
 \begin{center}
  $X_s=\psi _{s*}X_f=(-\frac{xf'(y)}{\varphi '_s(y)}-\frac{xf(y)}{(\varphi '_s(y))^2}\varphi ''_s(y))\frac{\partial}{\partial x}+\varphi '_s(y)f(y)\frac{\partial}{\partial y}$.
 \end{center}
It is a curve of Liouville vector fields belonging to $\mathcal{S}$, and we have $X_0=X_f$. To calculate 
the tangent space $T_{X_f}\mathcal{S}$ we need to evaluate at $s=0$ the derivative with respect to 
the parameter $s$ of $X_s$. It is:
\begin{center}
 $\frac{\partial}{\partial s}X_s|_{s=0}=(-xf'(y)\Phi '(y)-x\Phi ''(y)f(y))\frac{\partial}{\partial x}+\Phi '(y)f(y)\frac{\partial}{\partial x}$.
\end{center}
We have denoted as $\Phi (y)$ the vector field defined by $\frac{\partial}{\partial s}\varphi _s(y)=\Phi (\varphi _s(y))$. 
Note that $\frac{\partial}{\partial s}X_s|_{s=0}$ is a Liouville vector field, corresponding to the function 
$\Phi '(y)f(y)$, which belongs to $\langle f\rangle _{\mathcal{E}}$, since $\Phi \in m$. Thus, 
the tangent space of $\mathcal{S}$ at $X_f$ consists of those Liouville fields corresponding to functions 
belonging in the ideal $\langle f\rangle _{\mathcal{E}}$.
\end{proof}

The theorem above allows us to study bifurcations of Liouville vector fields. 
To illustrate this, we present here such bifurcations of low codimension. 

We begin with the singularity class $A_1$. The members of this class form a subset of codimension 
$1$ in the set of those members of $\mathcal{X}(\mathbb{R}^2,xdy)$ vanishing at the origin. To transversally unfold 
them, we only need to add to their local model, linear terms preserving the form of Liouville.
We arrive thus at the vector field $Q_a(x,y)=(-ax-2xy)\frac{\partial}{\partial x}+(ay+y^2)\frac{\partial}{\partial y}$, 
where $a$ a real parameter.
We have the following:
\begin{proposition}
The set of $X\in \mathcal{X}(\mathbb{R}^2,xdy)$ with $j^0X(0)=j^1X(0)=0$ and $j^2X(0)\neq 0$ 
has codimension 1 in the set of Liouville vector fields vanishing at the origin. Its members 
are all conjugate to the $A_1$ model given above. The curve of vector fields $Q_a(x,y)$ intersects 
at $a=0$ this set transversally.
\end{proposition}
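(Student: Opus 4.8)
The plan is to transfer all three assertions to the generating function $f$ through the dictionary $X_f=-xf'(y)\frac{\partial}{\partial x}+f(y)\frac{\partial}{\partial y}$ of Lemma \ref{basiclemma}, and then to read them off from the function-theoretic results of Sections 2 and 4.

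First I would rewrite the jet hypotheses as conditions on $f$. Since $X_f(0,0)=(0,f(0))$, the linear part of $X_f$ is $-xf'(0)\frac{\partial}{\partial x}+f'(0)y\frac{\partial}{\partial y}$, and the quadratic part is governed by $f''(0)$, the requirements $j^0X(0)=j^1X(0)=0$ and $j^2X(0)\neq 0$ are exactly $f(0)=f'(0)=0$ and $f''(0)\neq 0$. Such an $f$ has $codim_{RK}(f)=1$, so by Theorem \ref{functions} it is $\mathcal{RK}$-equivalent to $y^2$; Lemma \ref{transforming} then furnishes a diffeomorphism preserving $xdy$ carrying $X_f$ to the $A_1$ model. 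Hence the set under consideration is precisely the singularity class $\mathcal{S}$ of the $A_1$ model and all its members are Liouville conjugate, which settles the second assertion.

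For the codimension I would invoke the theorem above giving $T_{X_f}\mathcal{S}$, by which this tangent space corresponds to the ideal $\langle f\rangle_{\mathcal{E}}$, while the ambient space of Liouville fields vanishing at the origin corresponds under the same dictionary to the maximal ideal $m$ (the germs with $f(0)=0$). Taking $f=y^2$ one has $\langle f\rangle_{\mathcal{E}}=\langle y^2\rangle=m^2$ (in agreement with the computation $T_{RK}(y^2)=m^2$ of Section 2), so the codimension of $\mathcal{S}$ in the ambient space is $\dim(m/m^2)=1$, which is the first assertion.

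Finally, for transversality I would recognise the unfolding curve as $Q_a=X_{f_a}$ with $f_a(y)=ay+y^2$, so that $Q_0$ is the $A_1$ model lying in $\mathcal{S}$, and differentiate: $\frac{d}{da}Q_a|_{a=0}=-x\frac{\partial}{\partial x}+y\frac{\partial}{\partial y}$ corresponds to the function $y$. Because $\mathcal{S}$ has codimension $1$, transversality at $Q_0$ reduces to checking that this tangent direction does not lie in $T_{Q_0}\mathcal{S}$, that is, that $\mathbb{R}\,y+m^2=m$; this holds since $y\notin m^2$ and $m/m^2$ is one-dimensional. The only delicate point is the consistent use of the correspondence between fields and their generating functions together with the two tangent-space descriptions (one for $\mathcal{RK}$-equivalence of functions, one for Liouville conjugacy of fields); once these are aligned I expect no genuine obstacle, since the whole matter collapses to the single observation that the linear term $y$ is absent from $T_{Q_0}\mathcal{S}=m^2$.
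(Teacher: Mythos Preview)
Your proof is correct and follows essentially the same approach as the paper: the paper's own argument defers the codimension and conjugacy claims to ``the analysis given in the previous sections'' (which you spell out explicitly via Theorem \ref{functions} and Lemma \ref{transforming}), and then checks transversality exactly as you do, by computing $\frac{\partial}{\partial a}Q_a|_{a=0}=-x\frac{\partial}{\partial x}+y\frac{\partial}{\partial y}$ and observing that the corresponding function $y$ spans $m/\langle y^2\rangle$. Your more careful unpacking of the dictionary between fields and generating functions is a welcome clarification rather than a different route.
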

\begin{proof}
The codimension and the conjugacy to the $A_1$ model follows easily from the analysis given in 
the previous sections. Note that $Q_0(x,y)$ is the $A_1$ model, corresponding to the function 
$y^2$. The intersection is transversal, since:
\begin{center}
 $\frac{\partial}{\partial a}Q_a(x,y)|_{a=0}=-x\frac{\partial}{\partial x}+y\frac{\partial}{\partial y}$.
\end{center}
This is a Liouville vector field corresponding to the function $y$ which is the only function 
(up to a constant) which vanishes 
at the origin and belongs to $\mathcal{E}/\langle y^2\rangle$.
\end{proof}
Thus, $Q_a(x,y)$ is a transversal unfolding of the $A_1$ singularity. Vector fields depending 
on a single parameter undergoe, for isolated values of this parameter, the bifurcation depicted 
in Figure 1; this bifurcation is therefore the codimension 1 bifurcation occuring in vector fields of 
interest.
\begin{center}
\begin{figure}
\includegraphics[height=5.cm]{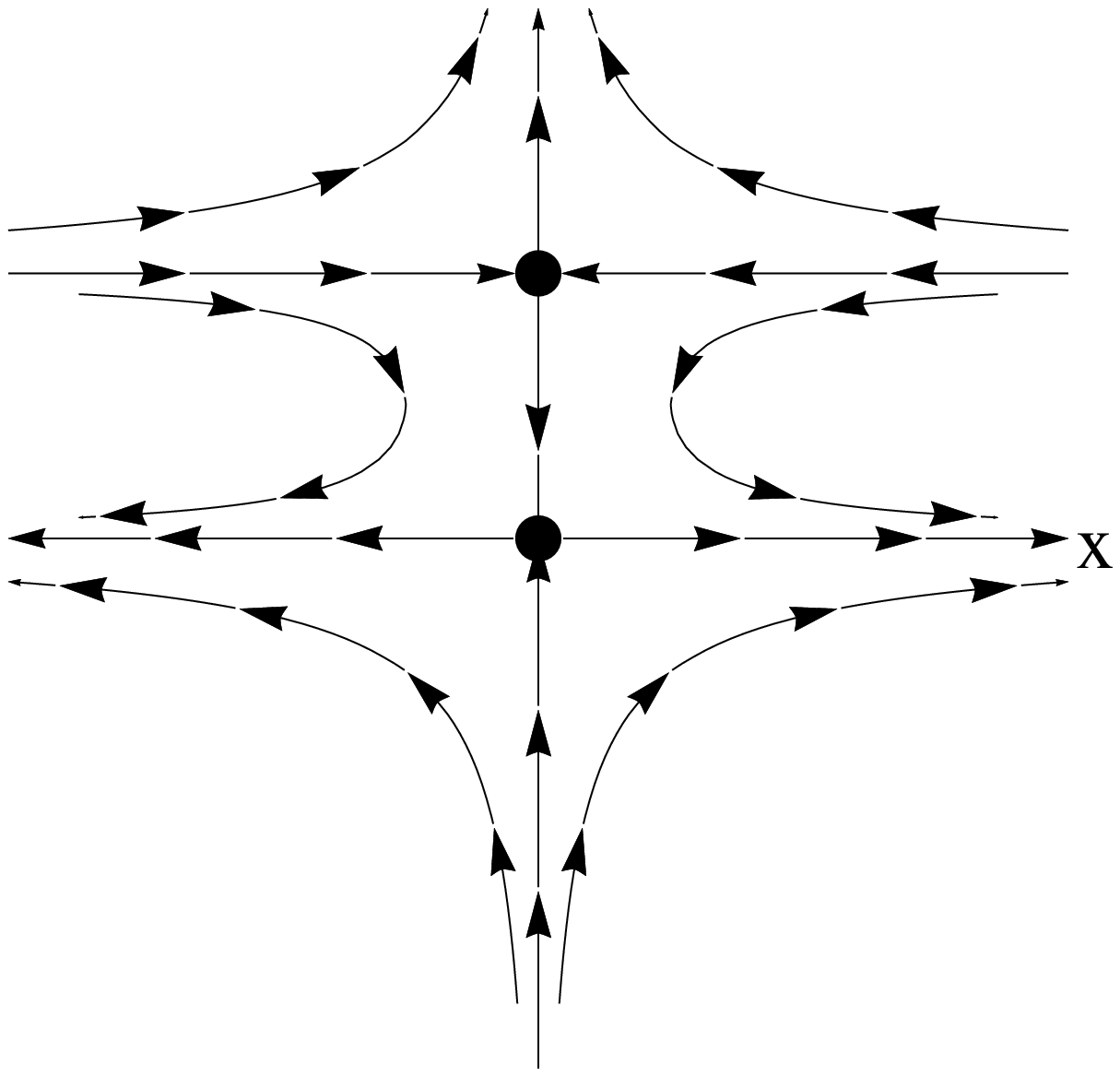}
\includegraphics[height=5.cm]{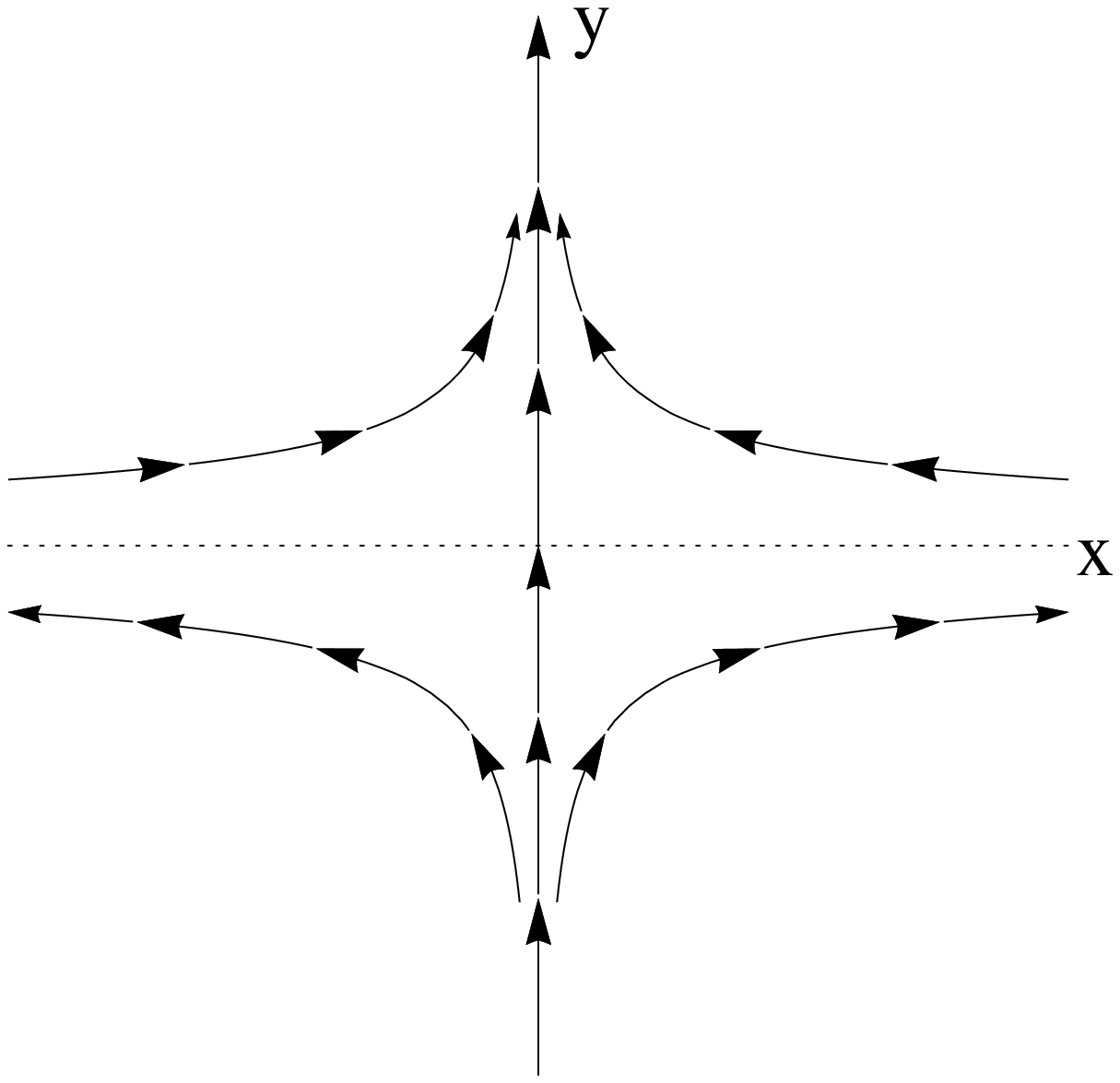}
\includegraphics[height=5.cm]{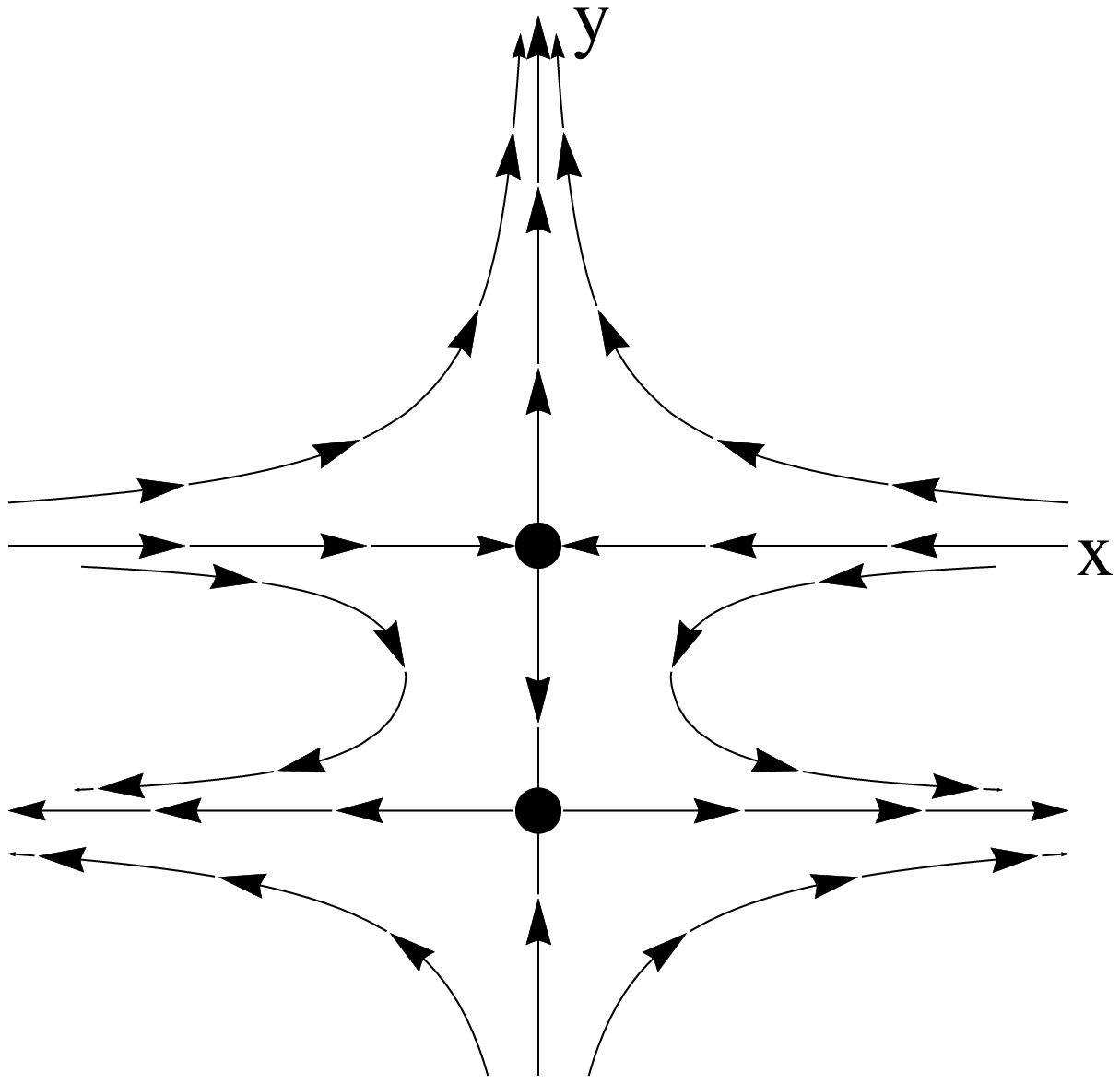}
\caption{Bifurcation of codimension one: $a<0,\ a=0,\ a>0$. The dotted line in the center picture stands 
for the line of singularities.}
\end{figure} 
\end{center} 
We proceed to bifurcations of codimension two. Consider the $A_2$ model, and add to it 
terms of lower degree. We arrive at $T_{a,b}=(-ax-2bxy-3xy^2)\frac{\partial}{\partial x}+(ay+by^2+y^3)\frac{\partial}{\partial y}$, 
where $a,\ b$ real parameters. We have the following:
\begin{proposition}
The set of $X\in \mathcal{X}(\mathbb{R}^2,xdy)$ with $j^0X(0)=j^1X(0)=j^2X(0)=0$ and $j^3X(0)\neq 0$ 
has codimension 2 in the set of Liouville vector fields vanishing at the origin. Its members 
are all conjugate to the $A_2$ model given above. The surface of vector fields $T_{a,b}(x,y)$ intersects 
at $a=b=0$ this set transversally.
\end{proposition}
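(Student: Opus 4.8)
The plan is to follow exactly the pattern of the codimension-one ($A_1$) proposition, translating every statement into the language of the defining function $f$ through Lemma \ref{basiclemma}. Recall that $X_f = -xf'(y)\frac{\partial}{\partial x} + f(y)\frac{\partial}{\partial y}$, and that writing $f(y) = \sum_{j\geq 0} c_j y^j$ one sees the homogeneous degree-$j$ part of $X_f$ is governed entirely by the single coefficient $c_j$. Hence the hypothesis $j^0X(0)=j^1X(0)=j^2X(0)=0$, $j^3X(0)\neq 0$ is equivalent to $f(0)=f'(0)=f''(0)=0$ and $f'''(0)\neq 0$, i.e. $f$ has a zero of order exactly three. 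By the lemma classifying germs with critical points such an $f$ is $\mathcal{RK}$-equivalent to $\pm y^3$, and Lemma \ref{transforming} then produces a Liouville diffeomorphism carrying $X$ to the $A_2$ model. This disposes of the conjugacy claim.

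For the codimension I would invoke the theorem computing the tangent space of a singularity class: for $f=y^3$ it yields $T_{X_f}\mathcal{S}=\{X_g : g\in\langle y^3\rangle_{\mathcal{E}}\}$. Since $\langle y^3\rangle_{\mathcal{E}} = m^3$, while the Liouville fields vanishing at the origin correspond to the full ideal $m$, the normal space is modelled on $m/m^3$. This quotient is two-dimensional, spanned by the classes of $y$ and $y^2$, so the class has codimension $2$ in the set of Liouville fields vanishing at the origin, in agreement with $codim_{RK}(y^3)=\dim(m/T_{RK}(y^3))=2$.

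For transversality, observe that $T_{a,b}$ is the Liouville field attached to $f(y)=ay+by^2+y^3$, so $T_{0,0}$ is precisely the $A_2$ model. Differentiating with respect to the two parameters at $a=b=0$ gives
\[
\frac{\partial}{\partial a}T_{a,b}\Big|_{a=b=0}=-x\frac{\partial}{\partial x}+y\frac{\partial}{\partial y},\qquad
\frac{\partial}{\partial b}T_{a,b}\Big|_{a=b=0}=-2xy\frac{\partial}{\partial x}+y^2\frac{\partial}{\partial y},
\]
which are exactly the Liouville fields corresponding to the functions $y$ and $y^2$. These represent the two generators of $m/\langle y^3\rangle$, so together with $T_{X_f}\mathcal{S}$ they span the whole space of Liouville fields vanishing at the origin; the intersection of the surface with $\mathcal{S}$ is therefore transversal at $a=b=0$.

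The only point calling for genuine care, rather than routine calculation, is the algebraic bookkeeping: identifying $\langle y^3\rangle_{\mathcal{E}}$ with $m^3$ and checking that the two parameter-derivatives land in independent classes modulo $T_{X_f}\mathcal{S}$. Once these are pinned down the argument is a direct transcription of the codimension-one case, and I do not anticipate any further obstacle.
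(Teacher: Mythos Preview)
Your proposal is correct and follows precisely the route the paper itself indicates: the paper omits the proof, stating only that it ``goes along the lines of the previous proposition'' (the $A_1$ case), and your argument is the natural transcription of that $A_1$ proof to the cubic setting---reducing the jet hypotheses to $f(0)=f'(0)=f''(0)=0,\ f'''(0)\neq 0$, invoking the $\mathcal{RK}$-classification and Lemma~\ref{transforming} for the conjugacy, using the tangent-space theorem with $\langle y^3\rangle=m^3$ for the codimension, and computing the two parameter-derivatives to obtain the Liouville fields attached to $y$ and $y^2$ as a basis of $m/m^3$ for transversality.
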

Its proof goes along the lines of the previous proposition, and it is therefore omitted. 
In figure 2 we present the bifurcations system $T_{a,b}$ system undergoes, for characteristic 
parameter values. 

Before discussing the diffeomorphism case, let us note that we could study bifurcations 
of arbitrary, finite, codimension following the exact same approach.
\begin{center}
\begin{figure}
\includegraphics[height=5.cm]{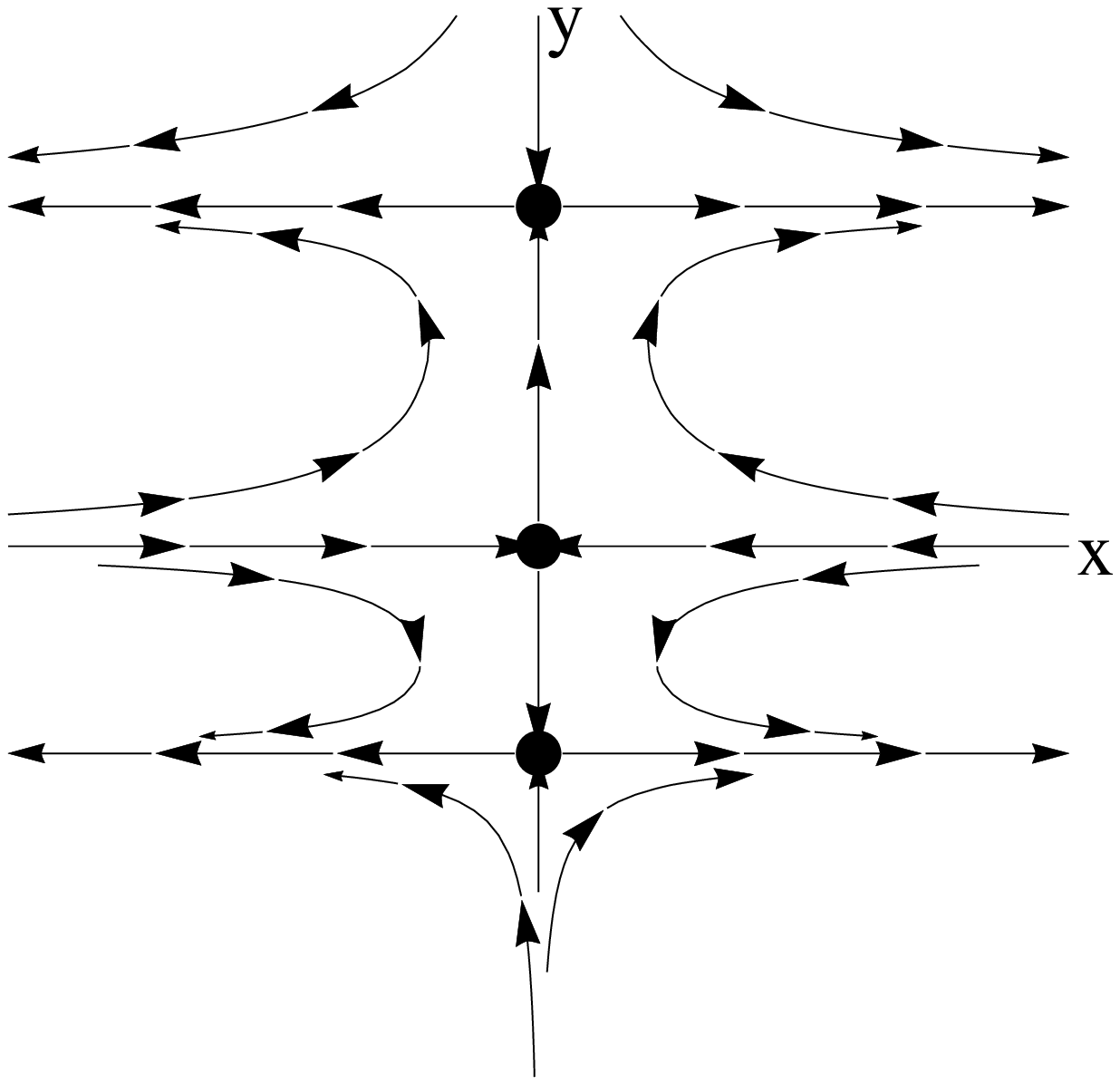}
\includegraphics[height=5.cm]{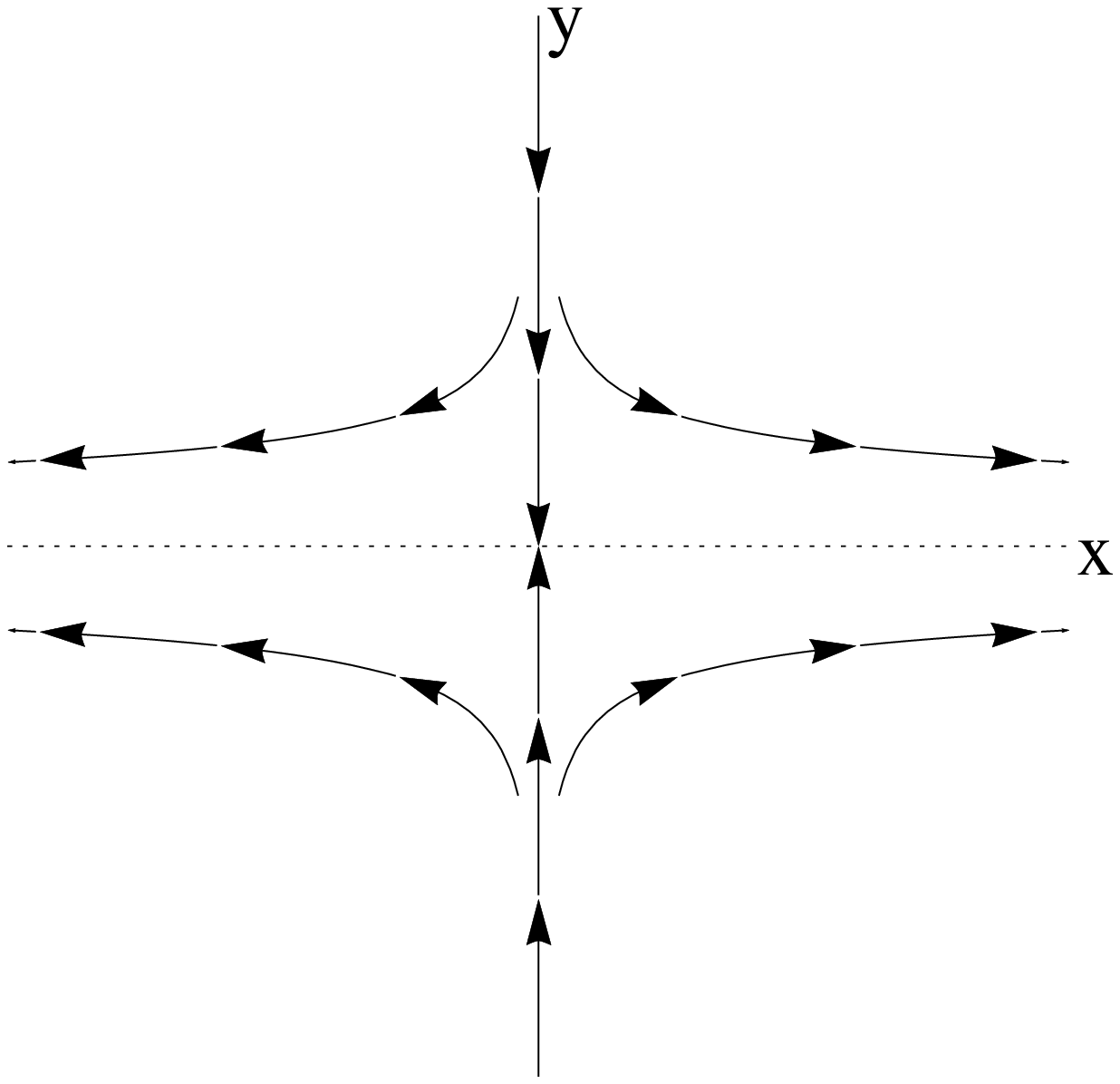}
\includegraphics[height=5.cm]{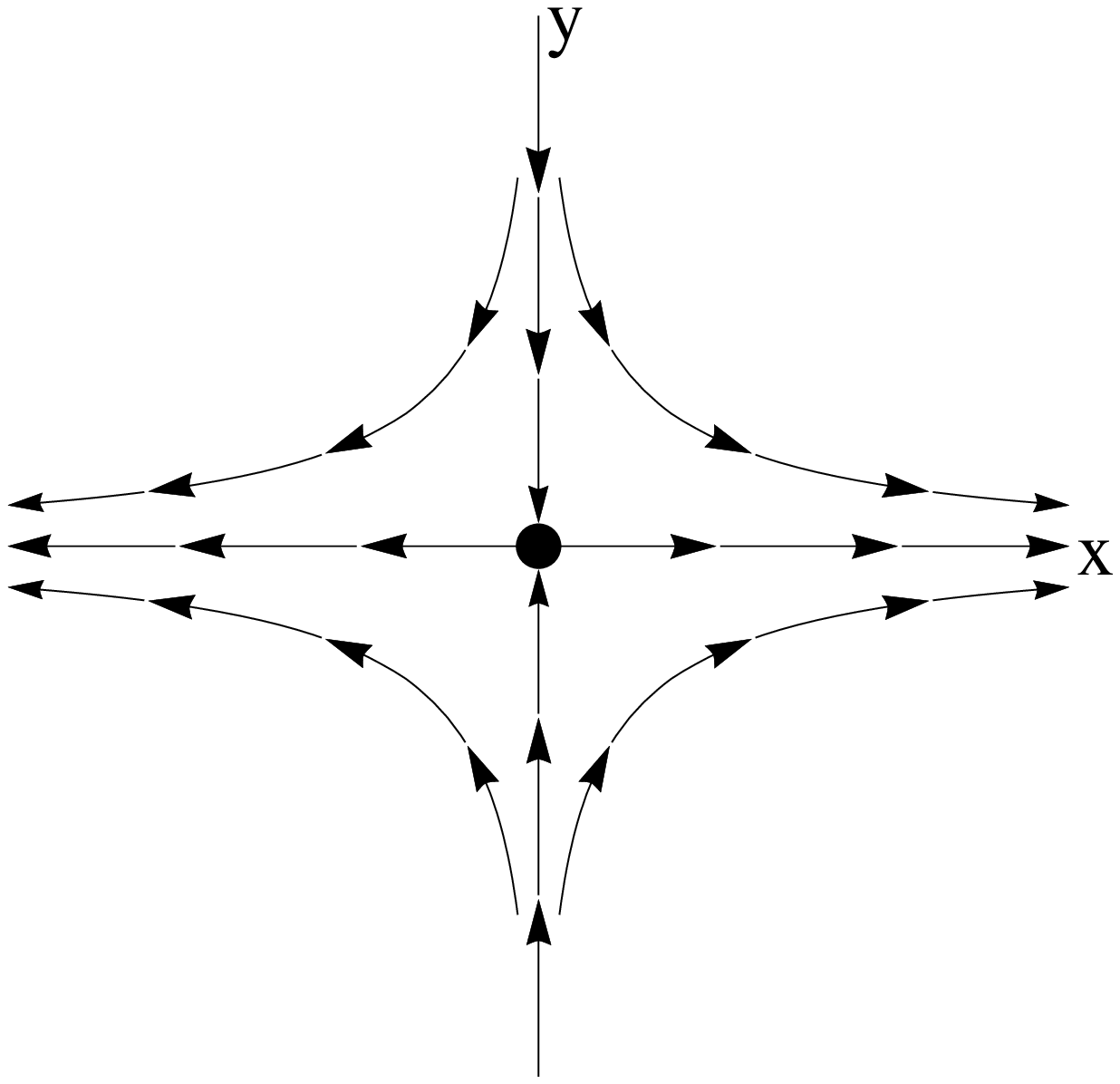}
\caption{Bifurcations of codimension two: $a_n,\ b_n <0$ (left), $a=b=0$ (center), $a>0,\ b=0$ (right). 
The dotted line in the center picture stands for the line of singularities.}
\end{figure} 
\end{center}
\section{Plane diffeomorphisms preserving the form of Liouville}
Let us now turn our attention to diffeomorphisms of the plane respecting the form of Liouville. 
As we saw, they are of the general form $f(x,y)=(\frac{x}{h'(y)},h(y))$. Diffeomorphism 
$h(y)$ of $\mathbb{R}$ uniquely defines such a diffeomorphism.

The unique linear diffeomorphism preserving the form of Liouville (and the origin) is thus 
$(x,y)\mapsto (ax,\frac{1}{a}y)$. Aside from this, there are no other polynomial 
members of $Diff(\mathbb{R}^2,xdy)$; as a consequence, finite jets (of any order) 
of Liouville diffeomorphisms studied here do not belong to the same group.

The classification of strict contactomorphisms, according to the natural 
equivalence relation, is of course our purpose; $f,g\in Diff(\mathbb{R}^2,xdy)$ 
are Liouville conjugate if there exists a third Liouville diffeomorphism 
$\phi$ such that $f\circ \phi=\phi\circ g$. To continue, and since we focus on 
fixed points, we impose the conditions $f(0)=g(0)=0$.

Generically, such diffeomorphisms may be linearized in a neighborhood of the origin.
\begin{proposition}
There exists a codimension zero subset of those members of $Diff(\mathbb{R}^2,xdy)$ vanishing 
at the origin, every member 
of which may be transformed, via a change of coordinates preserving the Liouville form, 
to its linear part.  
\end{proposition}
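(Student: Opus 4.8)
The plan is to reduce the linearization of a plane Liouville diffeomorphism to the classical linearization of a single diffeomorphism of the line. For a germ $h:(\mathbb{R},0)\to(\mathbb{R},0)$ of a diffeomorphism write $F_h(x,y)=(\frac{x}{h'(y)},h(y))$ for the associated member of $Diff(\mathbb{R}^2,xdy)$ fixing the origin; by the general form recalled at the start of this section, $h\mapsto F_h$ is a bijection onto this set. First I would record the composition rule. A direct computation, entirely analogous to the one behind Lemma \ref{transforming}, gives $F_h\circ F_\rho=F_{h\circ\rho}$, so $h\mapsto F_h$ is in fact a group isomorphism, and in particular it is injective.

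Consequently, for $f=F_h$, $g=F_{k}$ and $\phi=F_\rho$, the defining relation $f\circ\phi=\phi\circ g$ reads $F_{h\circ\rho}=F_{\rho\circ k}$, which by injectivity is equivalent to $h\circ\rho=\rho\circ k$ on the line. Thus Liouville conjugacy of plane diffeomorphisms is \emph{exactly} smooth conjugacy of the underlying line diffeomorphisms: the Liouville structure is carried along automatically, and no separate geometric constraint on the conjugating map survives.

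Next I would identify the target of the linearization. Expanding $h(y)=by+O(y^2)$ with $b=h'(0)\neq 0$, the linear part of $F_h$ at the origin is $(x,y)\mapsto(\frac{x}{b},by)$, which is precisely $F_\ell$ for the linear germ $\ell(y)=by$. Hence linearizing $f=F_h$ through a Liouville change of coordinates is equivalent to conjugating the line germ $h$ to its linear part $y\mapsto by$. The analytic core is then the one-dimensional linearization theorem: when the fixed point is hyperbolic, that is $|b|\neq 1$, the germ $h$ is smoothly conjugate to $y\mapsto by$ by a diffeomorphism $\rho$ fixing the origin (in dimension one, hyperbolicity rules out every resonance $b=b^{j}$, $j\ge 2$, so the formal normal form is linear and Sternberg's theorem upgrades this to a $\mathcal{C}^\infty$ conjugacy, the orientation-reversing case $b<0$ being included). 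Lifting $\rho$ to $\phi=F_\rho\in Diff(\mathbb{R}^2,xdy)$ then conjugates $F_h$ to its linear part, as required.

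Finally, the hyperbolicity condition $|h'(0)|\neq 1$ cuts out an open and dense set of germs, whose complement $\{h'(0)=\pm 1\}$ has positive codimension; this is the codimension zero subset asserted by the proposition. I expect the main obstacle to lie not in the two-dimensional geometry, which the isomorphism $h\mapsto F_h$ renders transparent, but in the analytic input of smooth one-dimensional linearization and in the exclusion of the parabolic case $|b|=1$, where linearization generically fails and one would have to settle for a genuine nonlinear normal form instead.
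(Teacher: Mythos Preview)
Your proposal is correct and follows essentially the same route as the paper: both reduce the problem to Sternberg's one-dimensional linearization of $h$ under the hyperbolicity condition $|h'(0)|\neq 1$, and then lift the conjugating diffeomorphism $\rho$ to the plane via $(x,y)\mapsto(\frac{x}{\rho'(y)},\rho(y))$. Your explicit observation that $F_h\circ F_\rho=F_{h\circ\rho}$, making $h\mapsto F_h$ a group isomorphism, is a clean packaging of the computation the paper leaves as ``easy to confirm'', but the underlying argument is the same.
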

\begin{proof}
Let us consider the set of Liouville diffeomorphisms having linear part 
$(ax,\frac{1}{a}y),\ a\neq \pm 1$. Its codimension is zero (in the set of Liouville diffeomorphisms vanishing 
at the origin) and its members are of the form $f(x,y)=(\frac{x}{h'(y)},h(y))$ 
where $h(y)=\frac{1}{a}y+h.o.t.$ a local diffeomorphism (we use h.o.t. as 
an abbreviation for ''higher order terms"). 

We have supposed that $a\neq\pm 1$; therefore a local diffeomorphism $\psi$ of $\mathbb{R}$ 
exists such that $\psi\circ h\circ \psi^{-1}=\frac{1}{a}y$ (this is the content of 
the Sternberg linearization theorem, see \cite{sternberg}). Using this 
diffeomorphism define $\phi (x,y)=(\frac{x}{\psi'(y)},\psi(y))$ and 
observe that it is a diffeomorphism, preserving the Liouville form, with 
inverse $\phi^{-1}(x,y)=(\frac{x}{(\psi^{-1}(y))'},\psi^{-1}(y))$.

As is easy to confirm, $\psi\circ f\circ\psi^{-1}=(ax,\frac{1}{a}y)$. 
\end{proof}
We have thus found the generic model for the mappings under study,that is $(x,y)\mapsto (ax,\frac{1}{a}y)$. 
As already remarked, it is actually the unique polynomial model for members of 
$Diff(\mathbb{R}^2,xdy)$; thus Liouville diffeomorphisms either may be linearized 
or are not finitely determined (at least finitely determined under the relation of Liouville 
conjugacy).
\section{Homogeneous members of $\mathcal{X}(\mathbb{R}^3,a)$ and linearization}
Having completed the study of vector fields of Liouville we may now state results 
for strictly contact vector fields of $\mathbb{R}^3$. Indeed, one needs only to add 
constant multiples of $\frac{\partial}{\partial z}$ to the local models presented above, 
to obtain vector fields which preserve both the contact form $a$ and the form of Liouville. 

Our choise of restricting our study to members of $\mathcal{X}_L(\mathbb{R}^3,a)$ stems 
from the fact that they are the only strictly contact vector fields which may have homogeneous 
components. Indeed, recall from section 1 the general form of a strictly contact vector field:
\begin{center}
$X=-\frac{\partial H(x,y,z)}{\partial y}\frac{\partial}{\partial x}+\frac{\partial H(x,y,z)}{\partial x}\frac{\partial}{\partial y}+(H(x,y,z)-x\frac{\partial H(x,y,z)}{\partial x})\frac{\partial}{\partial z}$.
\end{center}
Assuming that $H(x,y,z)$ (remember it does not depend on $z$) is a homogenous polynomial of degree $d$, 
vector field $X$ above is homogeneous of degree $d-1$ only in case its third component is constant, for $d=1$, 
or zero, for $d\geq 2$. Members 
of $\mathcal{X}_L(\mathbb{R}^3,a)$ are therefore the only homogeneous members of $\mathcal{X}(\mathbb{R}^3,a)$. 
We shall elaborate in this observation in this section, to show, using classical normal form 
theory, the linearization of strictly contact vector fields respecting the form of Liouville.

Consider members of $\mathcal{X}(\mathbb{R}^3,a)$ vanishing at the origin. If $X$ is such a field, 
let $X=X_1+X_2+...+X_k$ be its $k$--jet at zero, for some natural number $k$, where each $X_i,\ i=1,..k,$ 
is a homogeneous field of degree $k$. It is easy to see, equating terms of the same degree in 
equation $\mathcal{L}_X(a)=0$, that each $X_i$ is itself a member of $\mathcal{X}(\mathbb{R}^3,a)$.

We denote as $\mathcal{X}^d(\mathbb{R}^3,a)$ the subset of $\mathcal{X}(\mathbb{R}^3,a)$, 
the components of which are homogeneous functions of degree d. We easily prove the following:
\begin{lemma}
The vector space $\mathcal{X}^d(\mathbb{R}^3,a)$ is one dimensional. For each $d\in\mathbb{N}\setminus \{0\}$, 
its base consists of the field $X_d=dxy^{d-1}\frac{\partial}{\partial x}-y^d\frac{\partial}{\partial y}$.
\end{lemma}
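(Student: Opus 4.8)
The plan is to read off both inclusions from the general expression for a strictly contact field recorded at the start of this section. For the easy direction I would first check that $X_d$ genuinely belongs to $\mathcal{X}^d(\mathbb{R}^3,a)$: applying Lemma \ref{basiclemma} to the univariate function $h(y)=-y^d$ produces exactly
\[
-x h'(y)\frac{\partial}{\partial x}+h(y)\frac{\partial}{\partial y}=dxy^{d-1}\frac{\partial}{\partial x}-y^d\frac{\partial}{\partial y}=X_d,
\]
a strictly contact field whose third component vanishes and whose first two components are homogeneous of degree $d$. Since $X_d\neq 0$, this already shows the space is at least one dimensional.

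For the reverse inclusion I would take an arbitrary $X\in\mathcal{X}^d(\mathbb{R}^3,a)$ and write it through its contact Hamiltonian $H=H(x,y)$ (which, as recalled, does not depend on $z$) as
\[
X=-\frac{\partial H}{\partial y}\frac{\partial}{\partial x}+\frac{\partial H}{\partial x}\frac{\partial}{\partial y}+\Big(H-x\frac{\partial H}{\partial x}\Big)\frac{\partial}{\partial z}.
\]
Requiring the first two components $-\partial H/\partial y$ and $\partial H/\partial x$ to be homogeneous of degree $d$ forces the gradient of $H$ to be homogeneous of degree $d$, so that $H=H_{d+1}+c_0$ with $H_{d+1}$ homogeneous of degree $d+1$ and $c_0$ a constant. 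The third component $H-x\,\partial H/\partial x$ must then also be homogeneous of degree $d$; applying Euler's identity to $H_{d+1}$ rewrites this component as $c_0-dH_{d+1}+y\,\partial H_{d+1}/\partial y$, a sum of a degree-$0$ piece and a degree-$(d+1)$ piece. Because $d\geq 1$, both pieces must vanish separately, yielding $c_0=0$ together with the linear equation $y\,\partial H_{d+1}/\partial y=dH_{d+1}$.

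The decisive step is to solve this last equation inside the space of homogeneous polynomials of degree $d+1$. Expanding $H_{d+1}=\sum_{i+j=d+1}a_{ij}x^iy^j$, the equation becomes $\sum_{i+j=d+1}(j-d)\,a_{ij}x^iy^j=0$, so $a_{ij}=0$ unless $j=d$; combined with $i+j=d+1$ this leaves only the monomial $xy^d$. Hence $H=c\,xy^d$ for some constant $c$, and substituting back gives $X=-c\,X_d$. This shows that every element of $\mathcal{X}^d(\mathbb{R}^3,a)$ is a scalar multiple of $X_d$, so the space is one dimensional with basis $X_d$, and as a byproduct it reproves the remark that such homogeneous fields have vanishing third component. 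I expect the only real content to be this eigenvalue-type computation forcing $H_{d+1}$ to be a multiple of $xy^d$; the rest is bookkeeping with the general form of strictly contact fields and Euler's relation.
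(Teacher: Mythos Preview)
Your argument is correct. The paper itself omits the proof (it only says ``We easily prove the following'' and states the lemma), but the paragraph immediately preceding the lemma contains the intended reasoning: assuming the contact Hamiltonian $H$ is homogeneous, the third component $H-x\,\partial H/\partial x$ can be homogeneous of the right degree only if it is constant (for linear fields) or zero; hence homogeneous strictly contact fields are precisely the Liouville ones, which by Lemma~\ref{basiclemma} are parametrized by a univariate $h(y)$, and homogeneity of degree $d$ forces $h(y)=cy^d$.

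Your route differs only in organization: rather than first reducing to the Liouville form and then specializing $h$, you stay with the contact Hamiltonian $H$ throughout and use Euler's identity to convert the vanishing of the $\partial/\partial z$--component into the eigenvalue equation $y\,\partial_y H_{d+1}=d\,H_{d+1}$, which singles out the monomial $xy^d$. This is a clean, self-contained computation and has the small bonus of making the ``only $xy^d$ survives'' step completely explicit, whereas the paper's sketch leaves that to the reader. Both approaches are essentially the same calculation viewed from opposite ends; yours would serve perfectly well as the written-out proof the paper skips.
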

The local models of Table 2 constitute, therefore, the basis generating the fields of interest.

Linear fields (belonging to $\mathcal{X}^1(\mathbb{R}^3,a$) are of the form 
$X_1=ax\frac{\partial}{\partial x}-ay\frac{\partial}{\partial y}$, with $a$ 
arbitrary constant. In our case, therefore, the existence of hyperbolic singularities 
is excluded (actually, $X_1$ is also the unique linear member of $\mathcal{X}(\mathbb{R}^3,\alpha)$; 
strictly contact vector fields do not possess hyperbolic singularities). Despite this fact, fields having non--zero linear part can be 
linearized, in a neighborhood of the origin. We shall prove it now 
using an approach different from the one indicated above.

There are $\frac{1}{2}(d^2+3d+2)$ monomials depending on three variables, 
having degree $d$, as simple counting arguments may assure. Thereupon, the 
vector space $\mathcal{X}^d(\mathbb{R}^3)$ of homogeneous vector fields of 
degree $d$ is of dimension $\frac{3}{2}(d^2+3d+2)$, and one may easily verify 
that the fields appearing in Table 3, being $\frac{3}{2}(d^2+3d+2)$ 
independent vector fields of degree $d$, constitute a basis of it.
\begin{remark}
Vector fields of interest here belong to this base (to obtain them, just set 
$m_1=m_3=0$ to the first field of the second class). This base was presented, 
in the general n--dimensional case, in \cite{meiss}, section 4 of which contains 
the arguments we shall use to prove the next proposition. The author wishes 
to thank Prof. J D Meiss for clarifying them to him.
\end{remark}
\newpage
\begin{center}
Table 3
\end{center}    
{\small \begin{center}
   \begin{tabular}{lll}
   \hline
   fields & condition & number \\
\hline 
\\
$y^{m_1}z^{m_2}\frac{\partial}{\partial x}$

\\

$x^{m_1}z^{m_2}\frac{\partial}{\partial y}$

\\

$x^{m_1}y^{m_2}\frac{\partial}{\partial z}$  &  $m_1+m_2=d$ & $3d+3$  \\
\hline
\\
\vspace{0.2cm}
$(1+m_2)x^{m_1+1}y^{m_2}z^{m_3}\frac{\partial}{\partial x}-(1+m_1)x^{m_1}y^{m_2+1}z^{m_3}\frac{\partial}{\partial y}$ & $\ $&$\ $\\
$(1+m_3)x^{m_1}y^{m_2+1}z^{m_3}\frac{\partial}{\partial y}-(1+m_2)x^{m_1}y^{m_2}z^{m_3+1}\frac{\partial}{\partial z}$ 
& $m_1+m_2+m_3+1=d$ & $d^2+d$\\
\hline
\\
$x^{m_1+1}y^{m_2}z^{m_3}\frac{\partial}{\partial x}+x^{m_1}y^{m_2+1}z^{m_3}\frac{\partial}{\partial y}+x^{m_1}y^{m_2}z^{m_3+1}\frac{\partial}{\partial z}$& $m_1+m_2+m_3+1=d$ & $\frac{1}{2}(d^2+d)$\\
\hline 
       \end{tabular}
        \end{center}}
\vspace{1.cm}
If $X\in\mathcal{X}^d(\mathbb{R}^3)$, the vector field $[X_1,X]$, where $X_1$ is 
the unique, linear and non--zero, strictly contact vector field presented above, 
is also homogeneous of degree d (the brackets $[\cdot ,\cdot ]$ denote the usual 
commutator of vector fields). We may define therefore the operator 
$ad_{X_1}:\mathcal{X}^d(\mathbb{R}^3)\rightarrow \mathcal{X}^d(\mathbb{R}^3)$, 
$X\mapsto [X_1,X]$. Vector fields belonging to the base of $\mathcal{X}^d(\mathbb{R}^3)$ 
are eigenvectors of this operator; thus the subspaces generated by them are 
invariant under $ad_{X_1}$, ensuring the diagonal form of its matrix.
\begin{proposition}
There exists a codimension zero subset of $\mathcal{X}_L(\mathbb{R}^3,\alpha)$ 
every member of which may be transformed to its linear part. The linearizing diffeomorphism is 
close to the identity and preserves the contact form. 
\end{proposition}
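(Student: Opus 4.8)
The plan is to run the Poincar\'e--Dulac normal form procedure inside the Lie subalgebra $\mathcal{X}_L(\mathbb{R}^3,a)$, the decisive observation being that, although the operator $ad_{X_1}$ has a large kernel on the full space $\mathcal{X}^d(\mathbb{R}^3)$ of homogeneous fields of degree $d$ (the vanishing eigenvalue in the $z$--direction produces resonant monomials such as $x^ky^kz^m\frac{\partial}{\partial z}$), its restriction to the one--dimensional strictly contact subspace $\mathcal{X}^d(\mathbb{R}^3,a)$ is invertible for every $d\geq 2$. I would first fix the codimension zero subset to consist of those $X\in\mathcal{X}_L(\mathbb{R}^3,a)$ vanishing at the origin whose linear part $X_1=ax\frac{\partial}{\partial x}-ay\frac{\partial}{\partial y}$ is non--zero, that is $a\neq 0$; in terms of the defining function this is exactly the condition $f'(0)\neq 0$. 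Expanding such a field homogeneously as $X=X_1+\sum_{d\geq 2}c_dX_d$, with each summand $c_dX_d$ lying in $\mathcal{X}^d(\mathbb{R}^3,a)$, I would eliminate the terms of degree at least two, one degree at a time.

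The heart of the argument is the homological equation at degree $d$: to cancel $c_dX_d$ one seeks a generator $Y_d\in\mathcal{X}^d(\mathbb{R}^3,a)$ with $[X_1,Y_d]=c_dX_d$. A direct bracket computation gives $[X_1,X_d]=-a(d-1)X_d$, so $X_d$ is an eigenvector of $ad_{X_1}$ with eigenvalue $-a(d-1)$, which is non--zero for all $d\geq 2$ precisely because $a\neq 0$; the equation is then solved by $Y_d=-\frac{c_d}{a(d-1)}X_d$. The time--one flow $\phi_d$ of $Y_d$ is near the identity, and since $Y_d$ is strictly contact it satisfies $\mathcal{L}_{Y_d}a=0$ and hence preserves the contact form. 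Because $\mathcal{X}_L(\mathbb{R}^3,a)$ is a Lie subalgebra (the bracket of two fields annihilating $xdy$ again annihilates it, as $\mathcal{L}_{[Y,X]}=[\mathcal{L}_Y,\mathcal{L}_X]$), the transformed field $(\phi_d)_*X$ stays in $\mathcal{X}_L(\mathbb{R}^3,a)$, with its part of degree below $d$ unchanged and its degree $d$ part removed. Composing these maps over $d$ produces a contact transformation, close to the identity, that formally linearizes $X$.

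It then remains to upgrade the formal procedure to an honest diffeomorphism. Here I would appeal to the finite determinacy established earlier: in the codimension zero case the defining function is $1$--determined, so a genuine contact--preserving conjugacy to the linear part exists and absorbs the formal series. Equivalently, one may note that the field carries no $z$--dependence and that its planar part $ax\frac{\partial}{\partial x}-ay\frac{\partial}{\partial y}$ is a hyperbolic saddle, so that the Sternberg linearization theorem furnishes a convergent linearizing map; since the eigenvalues $-a(d-1)$ grow linearly and stay bounded away from zero, no small divisors intervene. The resulting diffeomorphism, being a composition of strictly contact near--identity maps, is close to the identity and preserves the contact form.

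The main obstacle I expect lies not in solvability of the homological equations --- that is immediate from the eigenvalue computation --- but in the passage from the formal to the actual diffeomorphism while retaining the contact property. It is exactly here that the absence of resonances inside $\mathcal{X}_L(\mathbb{R}^3,a)$, together with the Lie subalgebra property, proves decisive: the very same field $X$ possesses abundant resonances in $\mathcal{X}(\mathbb{R}^3,a)$ and in $\mathcal{X}(\mathbb{R}^3)$, and only the restriction to the Liouville--preserving setting removes them.
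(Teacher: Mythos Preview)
Your proposal is correct and follows essentially the same route as the paper: both run Poincar\'e--Dulac normal form theory, the key point being that $ad_{X_1}$ restricted to the one--dimensional space $\mathcal{X}^d(\mathbb{R}^3,a)$ has trivial kernel (your explicit eigenvalue $-a(d-1)$ makes this concrete, whereas the paper simply asserts the kernel is trivial). The paper argues contact--preservation from the diagonal form of $ad_{X_1}$ in the basis of Table~3, while you obtain it from the fact that the generators $Y_d$ are themselves strictly contact; these are two phrasings of the same mechanism.

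One point worth noting: the paper's proof is silent on the passage from formal to genuine linearization, whereas you correctly flag this and resolve it by invoking the $1$--determinacy result already established in section~3 (or, equivalently, Sternberg). This is a real addition on your part, and your observation that the absence of resonances holds only inside $\mathcal{X}_L(\mathbb{R}^3,a)$ --- the full contact algebra and the full space of vector fields are packed with resonant monomials coming from the zero eigenvalue in the $z$--direction --- is exactly the point the section is trying to make.
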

\begin{proof}
The subset we refer to is the set of vector fields of interest with non zero 
linearization, and its codimension is easily obtained. 

Classical normal form theory ensures that, by changing coordinates, we may discard all 
terms of $X=X_1+X_2+...\in \mathcal{X}_L(\mathbb{R}^3,\alpha)$ which are not contained in the 
complement of the range of this operator (an operator which leaves invariant the spaces 
$\mathcal{X}^d(\mathbb{R}^3,\alpha)$, as well as the subspaces generated by the basic vector fields, 
the subspace of fields which interest us included). 

The matrix of $X_1$ is self--adjoint, so a complement to the range of $ad_{X_1}$ is the kernel 
of this operator. This kernel however, as may easily be verified, is trivial, providing us with 
a diffeomorphism which transforms to its (non--zero) linear part every field 
of $\mathcal{X}_L(\mathbb{R}^3,\alpha)$. This diffeomorphism preserves the 1--form defining 
the contact structure; this stems from the diagonal form of the matrix of $ad_{X_1}$.
\end{proof}
Strictly contact vector fields project to symplectic fields of the plane; homogeneous 
strictly contact vector fields project to fields of the plane preserving the form of 
Liouville. We have studied here the local behavior of the later; the local study of 
the first remains a challenging task.
\section{Conclusions}
Contact systems have a long history, and attract a lot of attention, since 
they form a valuable tool in topological constructions, in Hamiltonian 
dynamics and in many physical applications (see \cite{geiges} for a textbook 
account of these fields, and further references).

Almost all contact systems possess hyperbolic singularities, as transversality arguments 
show. In this case, conditions for linearization have been obtained (\cite{chaperon2, llave, lychagin}). 
Results are much more rare, however, if the singularities are degenerate. 

We chose here to consider the simpler case of homogeneous strictly contact systems. 
This led us to the study of plane systems, preserving the form of Liouville, a 
subject which has an interest of its own. To study these fields we had to classify 
univariate functions according to the restricted contact equivalence relation. All these 
admit generalizations and deserve more study.

Indeed, extending the definition of restricted contact equivalence to arbitrary dimensions 
we get of course the differential conjugacy relation for vector fields. One could probably 
reobtain results of normal form theory, using this approach, which would potentially help 
the problem of classifying vector fiels preserving the form of Liouville in any dimension. 

And, as already mentioned, the general problem of analyzing the behavior of contact dynamical 
systems stands, both interesting and difficult. The author hopes to further comment on these 
subjects in the future.      
\section*{Acknowledgments} \noindent
This work is dedicated to my two professors, Tassos Bountis and Spyros Pnevmatikos, 
on the occasion of their 65th birthday. It is only a pleasure for the author to acknowledge 
the influence they had on him and to thank them for their constant support.

\end{document}